\documentclass[11pt]{article}
\usepackage[english]{babel}
\usepackage{amsmath,amssymb}
\usepackage[colorlinks=true]{hyperref}
\hypersetup{urlcolor=blue, citecolor=red, linkcolor=blue}
\usepackage{amsthm}

\usepackage{yhmath} 

\usepackage[T1]{fontenc}

\usepackage[a4paper, left=25mm, right=25mm, top=30mm, bottom=30mm]{geometry}

\usepackage{mathtools}
\usepackage{mathrsfs}
 

\theoremstyle{plain}
\newtheorem{theorem}{Theorem}[section]

\newtheorem{proposition}[theorem]{Proposition}
\newtheorem{lemma}[theorem]{Lemma}

\theoremstyle{definition}
\newtheorem{definition}[theorem]{Definition}
\newtheorem{remark}[theorem]{Remark}

\newcommand\xqed[1]{%
    \leavevmode\unskip\penalty9999 \hbox{}\nobreak\hfill
	\quad\hbox{#1}}
\newcommand\demo{\xqed{$\triangle$}}

\newtheorem{example}[theorem]{Example}

\newcommand\restr[2]{{
  \left.\kern-\nulldelimiterspace 
  #1 
  \right|_{#2} 
}}

%


\newcommand{\R}{\mathbb{R}}

\renewcommand{\d}{\mathrm{d}}

\newcommand{\Cinfty}{\mathscr{C}^\infty}
\newcommand{\T}{\mathrm{T}}
\newcommand{\cT}{\mathrm{T}^\ast}

\newcommand{\X}{\mathfrak{X}}

\newcommand{\parder}[2]{\frac{\partial #1}{\partial #2}}

\DeclareMathOperator{\ad}{ad}

\DeclareMathAlphabet{\mathpzc}{OT1}{pzc}{m}{it}
\def\d{\mathrm{d}}

\numberwithin{equation}{section}

\title{{\sffamily Hamiltonian stochastic Lie systems and applications}}

\author{{\sffamily $^a$ Eduardo Fern\'andez-Saiz
\thanks{e-mail:
   e.fernandezsaiz@cunef.edu  \ ORCID: 0000-0002-6220-4987}\ ,\
$^b$Javier de Lucas%
\thanks{e-mail:
   javier.de.lucas@fuw.edu.pl \ ORCID: 0000-0001-8643-144X (Corresponding author)}\ ,\
$^c$Xavier Rivas%
\thanks{e-mail:
   xavier.rivas@urv.cat \ ORCID: 0000-0002-4175-5157}, and\
$^b$Marcin Zaj\k ac%
\thanks{e-mail:
   marcin.zajac@fuw.edu.pl \ ORCID: 0000-003-4914-3531}
}
\\[1ex]
\normalsize\itshape\sffamily
$^a$Department of Mathematics, CUNEF Universidad, \\
\normalsize\itshape\sffamily
C. de Leonardo Prieto Castro 2, 28040 Madrid,
Spain
\\[1ex]
\normalsize\itshape\sffamily
$^b$
\normalsize\itshape\sffamily
Department of Mathematical Methods in Physics, University of Warsaw, \\\normalsize\itshape\sffamily   ul. Pasteura 5, 02.093, Warszawa, Poland
\\[1ex]
\normalsize\itshape\sffamily
$^c$Department of Computer Engineering and Mathematics, Universitat Rovira i Virgili,\\
\normalsize\itshape\sffamily
Avinguda Països Catalans 26, 43007, Tarragona, Spain
\\[1ex]
}


\pagestyle{myheadings}
\markright{{\rm E. Fern\'andez-Saiz, J. de Lucas, X. Rivas, M. Zaj\k ac ---  Hamiltonian stochastic Lie systems}}

\begin{document}

\maketitle
\begin{abstract}

This paper provides a practical approach to stochastic Lie systems, i.e.  stochastic differential equations whose general solutions can be written as a function depending only on a generic family of particular solutions and some constants related to initial conditions. We correct the stochastic Lie theorem characterising stochastic Lie systems, proving that, contrary to previous claims, it retains its  classical form in the Stratonovich approach. Meanwhile, we show that the form of stochastic Lie systems may significantly differ from the classical one in the It\^{o} formalism. New generalisations of stochastic Lie systems, like the so-called stochastic foliated Lie systems, are introduced. Subsequently, we focus on stochastic Lie systems that are Hamiltonian systems relative to different geometric structures. Special attention is paid to the symplectic case. We study their stability properties and lay the foundations of a stochastic energy-momentum method. A stochastic Poisson coalgebra method is developed to derive superposition rules for Hamiltonian stochastic Lie systems. Potential applications of our results are presented for biological stochastic models, stochastic oscillators, stochastic Lotka--Volterra systems, Palomba--Goodwin models, among others. Our findings complement previous approaches by using stochastic differential equations instead of deterministic equations designed to capture some of the features of models of stochastic nature.
\end{abstract}

\noindent\textbf{Keywords:} 
energy-momentum method,
Hamiltonian stochastic Lie system,
Poisson coalgebra, 
stability,  
stochastic Lie system, Stratonovich  formalism, 
superposition rule.
\bigskip

\noindent{\bf MSC 2020:} 
60H10, 
34A26 
(Primary),
37N25, 
53Z05 
(Secondary).

\newpage

{\setcounter{tocdepth}{2}
\def\baselinestretch{1}
\small
 
\def\addvspace#1{\vskip 1pt}
\parskip 0pt plus 0.1mm
\tableofcontents
}

\section{Introduction}

In its most classical definition, a {\it superposition rule} is a $t$-independent function that describes the general solution of a $t$-dependent system of first-order ordinary differential equations (ODEs) in normal form, a so-called {\it Lie system}, via a generic family of its particular solutions and a set of constants related to initial conditions \cite{BM10,CGM00,CGM07,PW83}. Superposition rules are used, for instance, in approximate and numerical methods, as they are applicable to Lie systems whose exact general solutions in explicit form are unknown \cite{Dissertationes,Pi12,PW83}. In particular, the knowledge of a particular finite set of exact and/or approximate solutions of a Lie system permits one to study its general solution via superposition rules \cite{PW83}.  

The superposition rule concept traces its origins back to Sophus Lie's pioneering and celebrated book \cite{Lie1893},  edited by Georg Scheffers. In that work, Lie stated the theorem nowadays called the Lie theorem, characterising Lie systems. Prior to that, Lie briefly described his Lie theorem in \cite{Li93a} without a proof, as a criticism of some previous works on superposition rules by Vessiot, Guldberg, K\"oningsberger, and other researchers (see \cite{LS21,Gu93,Ve93} and references therein). Lie stated that the results on superposition rules for differential equations on $\mathbb{R}$ devised by previous authors were a simple application of his theory on infinitesimal  transformation groups. Moreover, \cite{Lie1893} laid down the foundations for the theory of Lie systems. The Lie theorem is also called Lie--Scheffers theorem, as Scheffers took part in editing  Lie's work \cite{Lie1893}, or Lie superposition theorem \cite{BM10local-global}.  Previous remarks suggest saying `Lie theorem' rather than `Lie--Scheffers theorem' and using the denomination `Lie system' instead of `Lie--Scheffers system'. Since Vessiot made important contributions to the theory of Lie systems \cite{Dissertationes,LS21}, the term `Lie--Vessiot system' is also an appropriate designation for a Lie system. Scheffers, however, never independently researched Lie systems, and it is quite unlikely that he established any findings on the subject on his own. \cite{LS21}. 

The Lie theorem states that a $t$-dependent system of ODEs in normal form admits a superposition rule if and only if it describes the integral curves of a $t$-dependent family of vector fields that can be viewed as a curve in a finite-dimensional Lie algebra of vector fields, a {\it Vessiot--Guldberg Lie algebra} (see \cite{BM10local-global,CGM00,CGM07} for modern approaches  and further details). 

Lie systems have been thoroughly studied due to their widespread occurrence in physics and mathematics (see \cite{Dissertationes,LS21}, which contain more than 200 references on Lie systems and related topics). In the 1980s, Winternitz and his colleagues at the Centre de Recherches Math\'ematiques of the University of Montreal conducted an extensive study of Lie systems. In subsequent years, while Winternitz shifted his focus to other subjects, some of his collaborators continued exploring the topic \cite{OG_00}. Additionally, scholars from Poland, Italy, Spain,  Mexico, and Russia, such as J. Grabowski, G. Marmo, J. F. Cari\~nena, J. de Lucas,  F. J. Herranz, R. Flores-Spinoza, Y. Vorobiev and N. H. Ibragimov together with their research teams, began contributing to this field (see \cite[Chapter 1]{LS21}, \cite{Dissertationes} and references therein).

There has been a vast effort to generalise Lie systems to more general situations: $t$-dependent Schr\"odinger equations \cite{CR_04,Dissertationes}, partial differential equations \cite{OG_00}, quasi--Lie systems \cite{Dissertationes}, foliated Lie systems \cite{CGM00,Wys_23}, discrete differential equations \cite{BJLS_24,PW_04}, stochastic differential equations \cite{LO09}, superdifferential equations \cite{BGHW_87}, and others \cite{Dissertationes}. There has also been much interest in describing the geometrical properties of Lie systems and in using them to study differential geometric problems (see \cite{AHKR_20,AHR_22,Car_25,LS21,FS_25} and references therein). Moreover, Lie systems are related to important physical and mathematical models, which strongly motivates their analysis \cite{CGM00,GIMM_15,Dissertationes,Flo_11,Gam_17}.
In this work, we are mainly concerned with the extension of superposition rules and Lie systems to the realm of stochastic differential equations \cite{LO09}. In this way, we aim to draw the attention of researchers working on Lie systems to stochastic models, and vice versa. Therefore, to enhance accessibility to our work, we will provide a concise overview of various geometric and stochastic concepts.

Stochastic differential equations may describe phenomena that deterministic differential equations cannot \cite{Al09}. For example, the probability of disease extinction or outbreak, the quasi-stationary probability distribution, the final size distribution, and the expected duration of an epidemic are features that cannot effectively be modelled by deterministic methods \cite{Al09}. These and other reasons motivate the great interest in studying stochastic differential equations. Nevertheless, some deterministic differential equations can capture certain interesting characteristics of models even more easily than stochastic differential equations \cite{EFSZ20}, and stochastic differential equations offer a complementary alternative view \cite{GG_09}.

It is interesting to extend superposition rules to stochastic differential equations. The work \cite{LO09} extends the notion of a superposition rule to a class of systems of stochastic first-order ordinary differential equations, called {\it stochastic Lie systems}. The authors use a Stratonovich approach, as this makes the theory similar to the deterministic theory of Lie systems \cite{CGM00,Dissertationes,LS21}. Moreover, \cite{LO09} presents a precise and interesting account of certain local results about stochastic Lie systems and, as a byproduct, it also explains many technical results on standard Lie systems, which are usually absent in the literature \cite{CGM00,CGM07,LS21,Ram_02}. Despite its mathematical interest, many of the technical details given in \cite{LO09} are frequently omitted, as they generally have few practical applications. As noted in \cite{BM10local-global,LS21}, standard works on Lie systems, even theoretical ones, are essentially interested in local aspects and generic points, which leads them to skip many technical proofs analysed in \cite{LO09}. Nevertheless, there is mathematical interest in both global and local technical aspects, as illustrated by \cite{BM10local-global} for global superposition rules and by \cite{LO09}. It is worth stressing that stochastic Lie systems were found to have applications in the description of Brownian motions, economic models such as the Black--Scholes theory of derivative pricing, and so on \cite{LO09}. The potential interest of stochastic Lie systems in epidemic models was very briefly mentioned in \cite{EFSZ20}, without, as far as we know, any further development. It is worth noting that there are many new potential applications of the theory of Lie systems to stochastic models, which have so far remained almost unexplored.

In \cite{LO09}, a stochastic Lie theorem characterising Stratonovich stochastic first-order ordinary differential equations admitting a superposition rule was devised, but it contains a mistake that changes its meaning and potential applications. More precisely, the direct part of the stochastic Lie theorem in \cite{LO09} states that a stochastic Lie system may admit, locally, a superposition rule if its associated Stratonovich operator is related to a family of vector fields spanning an involutive distribution. Our present work proves that the vector fields must additionally span a finite-dimensional real Lie algebra of vector fields, which is a much stronger condition that already appears in the classical Lie--Scheffers theorem \cite{CGM00,CGM07,Lie1893}. We also determine the precise point of the mistake in \cite{LO09}, correct the statement of the stochastic Lie theorem, and provide a counterexample, based on SIS epidemic models, to illustrate when the stochastic Lie theorem in \cite{LO09} fails.
It is worth stressing that the differences between the incorrect and the correct versions of the stochastic Lie theorem have no impact on the applications carefully studied in \cite{LO09}.

Moreover, our work presents a concise introduction to stochastic Lie systems, aiming to provide a practical approach while avoiding technical details that are not necessary for general purposes. In this sense, it simplifies the elegant and mostly rigorous mathematical treatment in \cite{LO09} by using standard assumptions in mathematical constructions. For instance, we focus on local results at generic points, which significantly simplifies previously required techniques.

Our correction of the stochastic Lie theorem shows that the stochastic Lie theorem has no exclusive stochastic features in the Stratonovich approach: it retains the conditions of the classical Lie theorem. Meanwhile, it should be stressed that the conditions for a stochastic differential equation in It\^{o} form \cite{Ka81} to become a stochastic Lie system do not follow the standard form expected from the deterministic Lie theorem \cite{CGM00,CGM07,Dissertationes}. This is very important in practice, as many relevant stochastic differential equations are given in It\^{o} form and must be translated into the Stratonovich approach \cite{Sa70} in order to apply the methods of our work and of \cite{LO09}. In this respect, the relation between the It\^{o} and the Stratonovich approaches is reviewed, and some examples with potential applications are provided in this work. It is worth noting that stochastic differential equations in It\^{o} form may appear to be stochastic Lie systems, but they are not. This occurs in certain SIS epidemiological models \cite{GGHMP11}, as shown in this work.

Apart from introducing stochastic Lie systems, this paper suggests the applicability of various generalisations of Lie systems \cite{Dissertationes} to the stochastic domain. This opens a new vast realm of potential applications in physical, mathematical, and biological models, offering promising avenues for further exploration. One example is the extension of the theory of foliated Lie systems \cite{Wys_23} to the realm of stochastic differential equations. In fact, we suggest that, in analogy with the deterministic case, this may arise when studying certain problems of our stochastic energy-momentum method devised here \cite{MS88}.

By the stochastic Lie theorem, every stochastic Lie system admitting $\ell$ independent random variables is related to a family of $\ell$ vector fields (if time is considered as a special `deterministic' random variable) parametrised by random variables. All of them can be understood as linear combinations of elements of a finite-dimensional Lie algebra of vector fields (a so-called Vessiot--Guldberg Lie algebra) with coefficients depending on the random variables. More particularly, we study stochastic Lie systems admitting a Vessiot--Guldberg Lie algebra of Hamiltonian vector fields relative to some compatible differential geometric structure: the {\it Hamiltonian stochastic Lie systems}. In particular, we mainly focus on Hamiltonian stochastic Lie systems relative to symplectic forms, although our theory is easily generalisable to other geometric structures and stochastic Lie systems. In this context, the coalgebra method is extended to Hamiltonian stochastic Lie systems to derive superposition rules. This provides an extension to the stochastic realm of the theory of Hamiltonian Lie systems and their generalisations \cite{LS21}.

Our results are illustrated with many new examples of stochastic Lie systems. In particular, we study SIS models \cite{GGHMP11}. SIS models are epidemiological models that assume that individuals do not acquire immunity after infection. They concern two variables/compartments: $S$, representing susceptible individuals, and $I$, representing infected individuals in a large population of size $N$ where a single disease is spreading. SIS systems are usually treated in the literature in a deterministic manner \cite{EFSZ20}. This can be used to describe some of their features, but not all, as some are purely stochastic in nature.
Our models can also be used to study stochastic models arising in Lotka--Volterra systems, Palomba--Goodwin models \cite{Pal_39}, stochastic oscillators \cite{HH11,Ha19}, etc. In fact, stochastic Lie systems in general, and Hamiltonian stochastic Lie systems in particular, seem to have a wide range of potential applications.

We are also concerned with a theory of stability for Hamiltonian stochastic systems and, in particular, Hamiltonian stochastic Lie systems. Our study is specially concerned with linear ones, which appear as approximations of nonlinear ones and share some stability properties with them \cite{Ar74}. An example of a stochastic oscillator with a drift is analysed. Moreover, the basis for an energy-momentum method \cite{MS88} for Hamiltonian stochastic differential equations is established by using some results in \cite{LO08}. In particular, this allows one to study the relative equilibrium points of Hamiltonian systems, i.e. points where the dynamics is generated by Hamiltonian symmetries of the system under study. A characterisation (see Theorem \ref{RelativeEquilibrum}) of the relative equilibrium points for stochastic Hamiltonian systems in terms of critical points of their Hamiltonian functions is developed. As a byproduct, strong constants of motion \cite{LO08} for Hamiltonian stochastic Lie systems are briefly studied and illustrated with examples.

The structure of the paper is as follows.
Section~\ref{Fundamentals} is a brief introduction to stochastic differential equations, stochastic Lie systems, and many other notions to be used in this paper. It also shows the difference between the form of stochastic Lie systems in the Stratonovich and the It\^{o} approaches. Section~\ref{Sec:SSRandLT} is concerned with superposition rules for stochastic Lie systems and reviews and corrects the previous version of the stochastic Lie theorem. Section \ref{Sec:Hamiltonian_Stochastic_Lie_Systems} deals with Hamiltonian stochastic Lie systems. In particular, we define the newly proposed stochastic foliated Lie systems and the Hamiltonian counterparts of stochastic Lie systems. A theory of stability for stochastic Lie systems is given in Section \ref{Sec:SHS}. A relative equilibrium notion is presented and studied in Section \ref{Sec:Relative_Equilibrium_Stochastic_Differential}, and a stochastic version of a classical result characterising relative equilibrium points is presented. Meanwhile, Section \ref{Se:PCSLS} develops the Poisson coalgebra method for Hamiltonian stochastic Lie systems. Finally, Section \ref{Eq:Conclusions} presents our conclusions and future work.

\section{Stochastic differential
equations and stochastic Lie systems}\label{Fundamentals}

This section provides a concise introduction to stochastic differential equations and stochastic Lie systems. We have tried to provide enough information to follow the paper for people working on stochastic differential equations or  the theory of Lie systems. 

A detailed survey on the theory of stochastic differential equations can be found in \cite{Ar74, E82}, while the theory of stochastic Lie systems was elaborated for the first time in \cite{LO09}, which offers a precise formulation of the theory. To avoid technical details, we will assume objects to be smooth, locally defined, and problems at generic points satisfying very mild conditions. Following the classical Lie systems theory in \cite{Dissertationes,LS21}, we here provide a definition of a stochastic Lie system not based on the notion of a superposition rule.

In a nutshell, a $t$-dependent system of first-order ordinary differential equations on an $n$-dimensional manifold $M$ of the form
\begin{equation}\label{eq:Det}
\frac{\d \Gamma^i}{\d t}=X^i(t,\Gamma)\,,\qquad i=1,\ldots,n\,,\qquad \forall t\in \mathbb{R}\,,\qquad \forall \Gamma\in M\,,
\end{equation}
for certain functions $X^1,\dotsc,X^n\in \Cinfty(\mathbb{R}\times M)$,
 is deterministic in the sense that an initial condition in $M$  at a time $t_0$ establishes, under mild conditions on the functions $X^1,\ldots,X^n$, a unique solution giving a position in $M$ for every $t\in \mathbb{R}$. Geometrically, the coefficients $X^1(t,\Gamma),\ldots,X^n(t,\Gamma)$ give rise to a $t$-parametrised vector field on $M$ of the form
 $$
X=\sum_{i=1}^n X^i(t,\Gamma)\frac{\partial}{\partial \Gamma^i},
 $$
 which is formally called a {\it  $t$-dependent vector field}. More generally, an $\mathbb{R}^\ell$-dependent vector field on a manifold $M$ is a family of vector fields on $M$ parametrised by elements of  $\mathbb{R}^\ell$ (see \cite{Dissertationes} for details). 
 
 Then, \eqref{eq:Det} may be modified by considering that its form can also depend on certain `stochastic processes' $B^1,\ldots,B^r$, namely a series of $t$-dependent random variables satisfying certain appropriate conditions to be set hereafter in detail. This fact is shown by the expression
\begin{equation}\label{Eq:practicalSDE}
\delta \Gamma^i=X^i_1(B,\Gamma)\delta t+\sum_{\alpha=2}^\ell X^i_\alpha(B,\Gamma)\circ \delta B^\alpha\,,\qquad i=1,\ldots,n\,,
\end{equation}
for functions $ X^i_1 ,\ldots,X_\ell^i\in \Cinfty( \mathbb{R}^\ell\times M)$, with $B=(t,B^2,\ldots,B^\ell)$, $\Gamma=(\Gamma^1,\ldots,\Gamma^n)$, and $i=1,\ldots,n$. The symbol $\circ$ has been used to indicate that \eqref{Eq:practicalSDE} is understood in the so-called Stratonovich interpretation, to be briefly explained afterwards.
More precisely, $(\Omega, \mathcal{F}, P)$ is a {\it probability space}, where $\Omega$ is a manifold, $\mathcal{F}$ is a $\sigma$-algebra of subsets of $\Omega$, and $P:\mathcal{F}\rightarrow [0,1]$ is a probability function on $\mathcal{F}$. Each $B_\alpha:\mathbb{R}_+\times\Omega\rightarrow \mathbb{R}$ is a {\it semi-martingale} for $\alpha=1,\ldots,r$. Semi-martingales are good integrators relative to the It\^{o} or the Stratonovich integrals due to their properties. In short, a {\it martingale}  is a sequence of stochastic processes such that, at a particular time, the conditional expectation of the next value is equal to the present value, independently of all previous values.

A stochastic differential equation is then an expression on a manifold $M$ of the form
\begin{equation}
  \label{stochastic differential equation expression} \delta \Gamma = \mathfrak{S} (B,
  \Gamma)\circ \delta B\,,
\end{equation}
where $B:\mathbb{R}_+ \times \Omega \rightarrow \R^\ell$ is an
$\R^\ell$-valued semi-martingale and $\mathfrak{S} (B, \Gamma) : \T_B \mathbb{R}^\ell
\to \T_\Gamma M$, with $(B,\Gamma)\in \mathbb{R}^\ell\times M$, describes a Stratonovich operator. Every basis in
$\T^{\ast} \R^\ell$ allows one to decompose $\mathfrak{S}(B,
\Gamma)$ into $\ell$-components $(\mathfrak{S}_1 (B, \Gamma), \ldots, \mathfrak{S}_\ell (B, \Gamma))$ in the chosen basis, which will be frequently employed hereafter. Geometrically, every component is a mapping 
$$
\mathfrak{S}_\alpha: (B, \Gamma)\in \mathbb{R}^\ell\times M\mapsto\mathfrak{S}_\alpha(B,\Gamma)\in \T_\Gamma M\subset \T M,\qquad \alpha=1,\ldots,\ell,
$$
which gives rise, for every fixed value of $B$, to a vector field on $M$. In fact, $\mathfrak{S}_{\alpha}(B,\cdot):\Gamma\in M\mapsto \mathfrak{S}_\alpha(B,\Gamma)\in \T_\Gamma M\subset \T M$. In other words, $\mathfrak{S}_\alpha$ can be understood as an $\mathbb{R}^\ell$-parametrised vector field, which is usually called an $\mathbb{R}^\ell$-dependent vector field \cite{Dissertationes}.
Every particular solution to \eqref{stochastic differential equation expression} is also a semi-martingale $
\Gamma:\mathbb{R}_+\times \Omega\rightarrow M$. Moreover, we say that a particular solution has initial condition $\Gamma_0
\in M$ when $\Gamma(0,\omega_0)=\Gamma_0$ for every $\omega_0\in \Omega$ with probability one. Note that the standard time can be considered as a random variable $t:(t,\omega_0)\in \mathbb{R}_+\times \Omega\mapsto t\in\mathbb{R}$, whose value is independent of $\Omega$ and is included as the first component of $B$. In practice, $B$ is related to Brownian motions, also called Wiener processes, c\`adl\'ag martingales, It\^o processes of the form $\delta X = \sigma 
\delta W + \mu
\delta t$ for a Brownian motion $W$ and adapted processes $\sigma, \mu$, L\'evy processes, etc. It is worth noting that semi-martingales form the largest class of processes for which the It\^o integral can be defined. On the other hand, white noises are not semi-martingales. 

We assume that the driving processes are adapted to the natural filtration $(\mathcal{F}_t)_{t\geq 0}$, i.e. we only allow processes that evolve consistently with the information available up to the present time. Vector fields will satisfy local Lipschitz and linear growth conditions (or equivalent geometric hypotheses) guaranteeing existence and uniqueness of solutions. These conditions are a stochastic analogue of the usual ODE assumptions, ensuring existence and uniqueness of solutions to the SDE. Unless otherwise stated, the semi-martingales considered have continuous trajectories (in particular, Brownian motions). If jump processes such as L\'evy noise are allowed, the canonical Stratonovich calculus is not well defined; in that case, the Marcus integral provides the appropriate extension. Since our results rely on the Stratonovich calculus on smooth manifolds, we restrict throughout to continuous semi-martingale drivers.

The solution to \eqref{stochastic differential equation expression} is  of the form
\begin{equation}\label{Eq:SolStr}
\Gamma(t)-\Gamma(0)=\int_0^t\mathfrak{S}_1(B,\Gamma)\delta t+\sum_{\beta=2}^{\ell}
\int_0^t\mathfrak{S}_\beta(B,\Gamma) \circ \delta B^\beta_t,
\end{equation}
where the integrals appearing above are {\it Stratonovich integrals}. It is relevant to understand that stochastic differential equations can still be understood in the so-called It\^{o} way, in which the general solution is also of the form \eqref{Eq:SolStr}, but the integrals are assumed to be It\^{o} integrals, which is different. Which approach is used depends on the applications to be developed. It should be observed that in the interpretation of stochastic differential equations according to It\^o, the symbol $\circ$ is omitted. 
 
Unless otherwise stated, stochastic differential equations are assumed to be in the Stratonovich sense. This is motivated by the {\it Malliavin transfer principle} \cite{Lea_06,Ma78}, which suggests that the obtained theory will retain the standard differential theory, although it may not always be the case. In other words, Malliavin transfer principle states that ``A formula which is true in the deterministic context and which has a meaning via
Stratonovich stochastic calculus, is still valid, but only almost surely'' \cite{Lea_06}. 

Despite the Malliavin transfer principle, stochastic differential equations appear in the It\^{o} sense in many applications \cite{EFSZ22,NM19,WCDG18}. Hence, a manner to deal with such stochastic differential equations will be studied in this work. Moreover, the analysis of such stochastic differential equations shows some differences appearing in the stochastic formalism with respect to the deterministic theory of Lie systems, which enriches the theory.

Although every Stratonovich stochastic differential equation is equivalent to another It\^{o} stochastic differential equation, the form of both is different. Hence, one has to take care of the method employed to study a stochastic differential equation \cite{Ev13}. More exactly, assume that \eqref{Eq:practicalSDE} has coefficients that do not depend on $ {B}^2,\ldots, {B}^\ell$, then the It\^{o} differential equation admitting the same solutions  reads (see \cite[p. 137]{BCC_01} for details)
\begin{equation}\label{Eq:TranItoStratonovich}
\delta \Gamma^i=\left(\mathfrak{S}^i_1(t,\Gamma)+\frac 12\sum_{\beta=2}^\ell\sum_{j=1}^n\frac{\partial \mathfrak{S}_\beta^i}{\partial \Gamma^j}(t,\Gamma)\mathfrak{S}_\beta^j(t,\Gamma)\right)\delta t +\sum_{\beta=2}^\ell \mathfrak{S}^i_\beta(t,\Gamma )\delta {B}^\beta_t,\qquad i=1,\ldots,n.
\end{equation}
Note that we have dropped the $\circ$ sign in the previous expression.
It is worth stressing that the definition of a stochastic Lie system to be given soon relies on the new term appearing multiplying $\delta t$ in \eqref{Eq:TranItoStratonovich}, which is called the {\it drift term} \cite{BCC_01}. Moreover, the transformation from the Stratonovich to the Itô form does not change the coefficients with the $\delta {B}^\beta_t$ for $\beta = 2,\dotsc,\ell$.

\begin{example}\label{Ex:FirstItoStr} Let us consider a stochastic differential equation induced by a semi-martingale $B:(t,z)\in \mathbb{R}_+\times \Omega \mapsto (t,\mathcal{B})\in \mathbb{R}^2$ consisting of two variables (a  deterministic one, $t$, which can be understood as a particular type of stochastic variable, and a purely stochastic one, $\mathcal{B}$, describing a Brownian motion) of the It\^{o} form
\begin{equation}\label{eq:It}
\delta I=I(\beta N-\mu-\gamma-\beta I)\delta t+I\sigma (N-I) \delta \mathcal{B}\,,
\end{equation}
where $N$ is a constant describing the total population of a SIS epidemiological system \cite{GGHMP11}. Let us assume that $\sigma=\sigma(t)$. This model  describes the dissemination of a single communicable disease in a susceptible population of size $N$ (see \cite{GGHMP11}).  

The Stratonovich stochastic differential equation related to \eqref{eq:It} takes the form
\begin{equation}\label{eq:St}
\delta I=\left(-\sigma^2(t) I^3+\left(-\beta+\frac{3N\sigma^2(t)}{2}\right)I^2+\left(N \beta-\gamma-\mu-\frac{\sigma^2(t)N^2}{2}\right)I\right)\delta t+\sigma(t)  (N-I)I\circ \delta \mathcal{B}\,.
\end{equation}
This induces a Stratonovich operator between $\mathbb{R}^2$ and $\mathbb{R}$ such that, for each $t,\mathcal{B},I$, one has an operator
$$ \mathfrak{S}(t,\mathcal{B},I):(\delta t,\delta \mathcal{B})\in \T_{(t,\mathcal{B})}\mathbb{R}^2\longmapsto \mathfrak{S}_1(t,I)\delta t+\mathfrak{S}_2(t,I)\circ \delta \mathcal{B}\in \T_I\mathbb{R}\, $$
for 
$$
\begin{gathered}
\mathfrak{S}_1(t,I)=-\sigma^2(t) I^3+\left(-\beta+\frac{3N\sigma^2(t)}{2}\right)I^2+\left(N \beta-\gamma-\mu-\frac{\sigma^2(t)N^2}{2}\right)I,\\
\mathfrak{S}_2(t,I)=\sigma(t) (N-I)I.
\end{gathered}
$$
Geometrically, each component of $\mathfrak{S}$ can be understood as an $\mathbb{R}^2$-dependent vector field. More specifically, $\mathfrak{S}_1$ and $\mathfrak{S}_2$ can be understood as $\mathbb{R}^2$-dependent vector fields
$$
\left[-\sigma^2(t) I^3+\left(-\beta+\frac{3N\sigma^2(t)}{2}\right)I^2+\left(N \beta-\gamma-\mu-\frac{\sigma^2(t)N^2}{2}\right)I\right]\frac{\partial}{\partial \Gamma},\qquad \sigma(t) (N-I)I\frac{\partial}{\partial \Gamma}.
$$
Solutions to \eqref{eq:St} are given by semi-martingales of the form $\Gamma:(t,\omega_0)\in \mathbb{R}_+\times \Omega\mapsto I(t,\omega_0)\in \mathbb{R}$ determined by an initial condition described by a random variable $\Gamma_0:\Omega\rightarrow \mathbb{R}$ such that $
\Gamma_0(\omega_0)=1$ for every $\omega_0\in \Omega$. \demo
\end{example}

Equations \eqref{Eq:TranItoStratonovich} and the last example illustrate the central role of the
Stratonovich formulation. To make the relation with the Itô formulation explicit,
we state the following standard lemma \cite{Kunita1990, Oksendal2003}.
\begin{lemma}[Itô–Stratonovich correction in coordinates]
\label{lem:ItoStratCoords}
Let $X=(X^1,\dots,X^n)$ be a Stratonovich diffusion on a smooth manifold $M$ with local
coordinates $(x^i)$, written as
\[
\mathrm{d}x^i_t = A^i(x_t)\,\mathrm{d}t + \sum_{k=1}^m B^{i}_{k}(x_t)\circ \mathrm{d}B^k_t .
\]
Then the corresponding Itô form is
\[
\mathrm{d}x^i_t = \Bigl(A^i(x_t) + \tfrac12 \sum_{k=1}^m \sum_{j=1}^n 
  B^j_{k}(x_t)\,\frac{\partial B^i_{k}}{\partial x^j}(x_t)\Bigr)\mathrm{d}t
  + \sum_{k=1}^m B^{i}_{k}(x_t)\,\mathrm{d}B^k_t .
\]
Equivalently, in tensorial notation
\[
X^{\text{Itô}}_0 \;=\; X^{\text{Strat}}_0 + \tfrac12 \sum_{k=1}^m \nabla_{X_k}X_k,
\]
independently of the choice of torsion-free connection $\nabla$.
\end{lemma}

The additional drift term $\tfrac12\sum_{k=1}^m \nabla_{X_k}X_k$ may not belong to the
finite-dimensional Lie algebra generated by $\{X_0,\dots,X_m\}$.
Therefore, a system that is a Lie system in the Stratonovich sense may cease to be one in the Itô
formulation. This observation is important for applications, where many models are naturally written
in Itô form, and motivates our systematic use of the Stratonovich framework in the theory of
stochastic Lie systems.

Recall that the theoretical utilisation of Stratonovich stochastic equations is justified by Malliavin's Transfer Principle \cite{Ma78}, which states that the results from the theory of ordinary differential equations remain applicable, in an analogous way, to stochastic differential equations in Stratonovich form. This principle is just a general guideline without a proof, which implies that it must be used just as a general suggestion. Nevertheless, as shown in Example \ref{Ex:FirstItoStr}, the relation to the It\^{o} approach has to be  too   considered for analysing applications.

Let us now turn to analysing the main type of stochastic differential equations to be studied in this work (see \cite{LO09} for the pioneering work on the topic).

\begin{definition} A {\it stochastic Lie system} is a stochastic differential equation on a manifold $M$ of the form $
\delta \Gamma=\mathfrak{S}(B,\Gamma)\circ \delta B
$ such that $B:\mathbb{R}_+\times \Omega\rightarrow \mathbb{R}^\ell$ is a semi-martingale and $\mathfrak{S}$ is a Stratonovich operator such that 
\begin{equation}
\label{Eq:STL}\mathfrak{S}(B,\Gamma)=\left(\sum_{\alpha=1}^rb_1^\alpha(B)X_\alpha(\Gamma),\ldots,\sum_{\alpha=1}^rb_\ell^\alpha(B)X_\alpha(\Gamma)\right),\qquad \forall\Gamma\in M,\quad \forall B\in \mathbb{R}^\ell,
\end{equation}
for a family of functions $b_a^\alpha:B\in \mathbb{R}^\ell\mapsto b_a^\alpha(B)\in \mathbb{R}$ that are assumed to be non-anticipative, i.e. measurable with
respect to the natural filtration, with $\alpha=1,\ldots,r$ and $a=1,\ldots,\ell$, and a certain $r$-dimensional Lie algebra of vector fields on $M$ spanned by $X_1,\ldots,X_r$.    We call the Lie algebra $V=\langle X_1,\ldots,X_r\rangle$ a {\it Vessiot--Guldberg Lie algebra} of the stochastic Lie system \eqref{Eq:STL}.
\end{definition}

As for any other Stratonovich stochastic differential equation \eqref{stochastic differential equation expression}, recall that the Stratonovich operator $\mathfrak{S}=(\mathfrak{S}_1,\ldots,\mathfrak{S}_\ell)$ of a stochastic Lie system can be considered as an $\ell$-element family consisting of $\mathbb{R}^\ell$-vector fields on $M$ of the form $\mathfrak{S}_\alpha:(B,\Gamma)\in \mathbb{R}^\ell\times M\mapsto \mathfrak{S}_\alpha(B,\Gamma)\in \T_\Gamma M\subset \T M$ for $\alpha=1,\ldots,\ell$. Moreover, each one of these $\mathbb{R}^\ell$-vector fields on $M$ is a linear combination with functions depending on $\mathbb{R}^\ell$ of a finite-dimensional Lie algebra of vector fields $X_1,\ldots,X_r$. Recall that this implies that there exists constants $c_{\alpha\beta}^\gamma$, with $\alpha,\beta,\gamma=1,\ldots,r$, so that
$$
[X_\alpha,X_\beta]=\sum_{\gamma=1}^rc_{\alpha\beta}^\gamma X_\gamma,\qquad \alpha,\beta=1,\ldots,r.
$$
It is very important to stress that $c_{\alpha,\beta}^\gamma$ are constants. Although a stringent condition, it is justified by the significant applications and geometric characteristics of Lie systems (refer to \cite{Dissertationes,LS21} where over a two hundred related works, applications, and key authors of Lie systems are cited) and stochastic counterparts in \cite{LO_09} and this work. Let us say in advance that, as shown in following parts of this work, the dimension and nature of the Vessiot--Guldberg Lie algebra is related to the properties of the stochastic Lie system. In standard Lie systems, where $B$ is just the time, a solvable Vessiot--Guldberg Lie algebra ensures, for instance, that the associated Lie system can be integrated by quadratures \cite{CRFG_16}.


\begin{example}\label{eq:WhiteOscil} Let us consider a generalisation of a damped harmonic oscillator on $\T \mathbb{R}$ with a stochastic part  modelled by means of a semi-martingale $W_1$ related to a one-dimensional Wiener process in It\^o form, which retrieves as a particular cases several models in the previous literature (cf. \cite{HH11,Ha19,MH_04}). In particular, consider adapted coordinates $(x,y=\dot x)$ on $\T \mathbb{R}$ and the stochastic differential equation given by
\begin{equation}\label{eq:HOWN}
\delta\begin{pmatrix} x \\ y \end{pmatrix} = \begin{pmatrix} 0 & 1 \\ -\omega_f^2(t) & -k(t) \end{pmatrix}\begin{pmatrix} x \\ y \end{pmatrix}\delta t + \begin{pmatrix} 0 & 0 \\ 0 & -\sigma(t) \end{pmatrix}\begin{pmatrix} x \\ y \end{pmatrix} \delta W_1\,,
\end{equation}
where $\sigma(t)$ is any $t$-dependent function, e.g. quantifying noise, and the functions $\omega_f(t)$ and $k(t)$ are extensions to a stochastic realm of the usual functions relative to the standard deterministic model for a dissipative harmonic oscillator
$$ \ddot x+\omega_f^2(t) x+k(t)\dot x=0\,. $$
Then, $\omega_f(t)$ is a $t$-dependent frequency  of the oscillator \eqref{eq:HOWN} while $k(t)$ is frequently used to describe a friction-like effect.

Let us see how the model \eqref{eq:HOWN} can be considered as a linear stochastic Lie system. The first step needed to apply our formalism is  to transform  the previous stochastic system from an It\^o into a Stratonovich one. This can be reached in a simple manner by applying the transformation equation given in \eqref{Eq:TranItoStratonovich}.  Indeed, this expression shows that \eqref{eq:HOWN}  can be related to a Stratonovich operator of the form
\begin{equation}\label{Eq:Str}
\mathfrak{S}(t, W_1, x, y)=\left(\begin{pmatrix} 0 & 1 \\ -\omega_f^2(t)& -k(t) -\sigma^2(t)/2 \end{pmatrix}\begin{pmatrix}x\\y\end{pmatrix},\begin{pmatrix}0 & 0 \\ 0 & -\sigma(t)\end{pmatrix}\begin{pmatrix}x\\y\end{pmatrix}\right). 
\end{equation}
It is worth recalling that many stochastic differential equations are formulated in the It\^o framework. 

Once the Stratonovich form has been obtained, let us show that one may apply the theory of stochastic Lie systems to this example. With this aim, one has to recall that the two components of the Stratonovich operator \eqref{Eq:Str} are related to two $t$-dependent vector fields, corresponding to its two components, $\mathfrak{S}_1,\mathfrak{S}_2$, of the form
$$
\mathfrak{S}_1^x\frac{\partial}{\partial x}+\mathfrak{S}_1^y\frac{\partial}{\partial y}=y\frac{\partial}{\partial x}-(\omega_f^2(t)x+[k(t)+\sigma^2(t)/2]y)\frac{\partial}{\partial y},\qquad \mathfrak{S}_2^x\frac{\partial}{\partial x}+\mathfrak{S}_2^y\frac{\partial}{\partial y}=-\sigma(t)y\frac{\partial}{\partial y},
$$
respectively. Note that both $t$-dependent vector fields can be considered as $\mathbb{R}^2$-dependent vector fields that have a trivial dependence on the variable $W_1$ in $(t,W_1)\in \mathbb{R}^2$. 
To describe our model with the theory of stochastic Lie systems, one has to find some finite-dimensional Lie algebra of vector fields on $\T\mathbb{R}^2$, let us say  $V^D$, such that each component of the Stratonovich operator, which can be understood as an $\mathbb{R}^2$-dependent vector field, becomes  a linear combination with coefficients depending on $t,W_1$ of a basis of $V_D$. 

In this case, the sought Lie algebra of vector fields can be obtained by considering the vector fields on $\T \mathbb{R}$ given by
\begin{equation}\label{Eq:BaGL22}
 X_{11}=x\frac{\partial}{\partial x}\,,\qquad X_{12}=y\frac{\partial}{\partial x}\,,\qquad X_{21}=x\frac{\partial}{\partial y}\,,\qquad X_{22}=y\frac{\partial}{\partial y}\,.
\end{equation}
 These vector fields span a four-dimensional Lie algebra of vector fields $V_D$ isomorphic to the general Lie algebra, $\mathfrak{gl}_2$, of $2\times 2$ matrices with real coefficients. In fact, the commutation relations in the basis \eqref{Eq:BaGL22} read
 \begin{align*}
 & [X_{11},X_{12}] = -X_{12}\,, && [X_{11}, X_{21}] = X_{21}\,, && [X_{11}, X_{22}] = 0\,,\\
 & [X_{12}, X_{21}] = X_{22} - X_{11}\,, && [X_{12},X_{22}] = -X_{12}\,,&& [X_{21},X_{22}] = X_{21}\,,
 \end{align*}
and they are equal to the commutation relations between
$$
M_{11}=\left[\begin{matrix}
-1&0\\
0&0
\end{matrix}\right],\,\,M_{12}=\left[\begin{matrix}
0&-1\\
0&0
\end{matrix}\right],\,\,M_{21}=\left[\begin{matrix}
0&0\\
-1&0
\end{matrix}\right],\,\,M_{22}=\left[\begin{matrix}
0&0\\
0&-1
\end{matrix}\right].
$$
Hence, the Stratonovich operator \eqref{Eq:Str} describing our model can be written as 
$$
\mathfrak{S}(t, W_1, x, y)=\left(X_{12}-\omega_f^2(t) X_{21}-(k(t)+\sigma^2(t)/2)X_{22},-\sigma(t)X_{22}\right). 
$$
Thus, each component of the Stratonovich operator can be written as a linear combination with coefficients depending on $t,W_1$ (in particular the coefficients of our model only depend on $t$), of the vector fields of a basis of $V_D$. Hence, \eqref{eq:HOWN} is a stochastic Lie system and $V_D$ becomes an associated Vessiot--Guldberg Lie algebra. 

Note that more general models can be considered as stochastic Lie systems by assuming that their Stratonovich operators take the form
$$
\mathfrak{S}(t,W_1,x,y)=\left(\sum_{\alpha,\beta=1}^2b_{\alpha\beta}(t,W_1)X_{\alpha\beta},\sum_{\alpha,\beta=1}^2b^W_{\alpha\beta}(t,W_1)X_{\alpha\beta}\right),
$$
for arbitrary functions $b_{\alpha\beta},b^W_{\alpha\beta}:\mathbb{R}^2\rightarrow \mathbb{R}$, which also admit a Vessiot--Guldberg Lie algebra $V_D$. One could choose even larger Vessiot--Guldberg Lie algebras, e.g. the Lie algebra of affine vector fields on $\mathbb{R}^2$. Moreover, if the functions $k(t)$, $\omega_f(t)$ and $\sigma(t)$ take particular values, e.g. constant ones, it may happen that one could choose a smaller Vessiot--Guldberg Lie algebra. For instance, if $k(t)=\omega_f(t)=\sigma(t)=0$, the Stratonovich operator could be described via a Vessiot--Guldberg Lie algebra spanned by $X_{12}$. Depending on the dimension of the Vessiot--Guldberg Lie algebra, the superposition rule may depend on less or more particular solutions, making numerical methods simpler or more difficult to be applied. This fact will be exaplained in detail after the stochastic Lie theorem in Section \ref{Sec:SSRandLT}.
\demo
\end{example}

\begin{example}\label{ExSLS-SIS} It is worth noting that an It\^{o} stochastic differential equation $\delta\Gamma=\mathfrak{S}(B,\Gamma)\delta B$ with $\mathfrak{S}(B,\Gamma)$ taking the form \eqref{Eq:STL} may not be a stochastic Lie system. This shows that stochastic Lie systems, in the It\^o framework, do not match exactly the form given in classical Lie systems. Let us provide an example of this, with practical implications, using the SIS model in Example \ref{Ex:FirstItoStr}.
Let us study a stochastic differential equation $\delta I=\mathcal{I}(t,\mathcal{B},I)\delta B$ related to the deterministic SIS model for particular values  $N=100$, $\beta=1/2$, $\mu=\gamma=0$, which is indeed a deterministic approximation of it. More specifically, consider the It\^{o} stochastic differential equation
\begin{equation}\label{Eq:PseudoSto}
\delta I=(50-I/2)I\delta t+\sigma(100-I)I\delta \mathcal{B}\,,
\end{equation}
for a  $t$-dependent parameter $\sigma=\sigma(t)$ which is not constant. This model generalises the stochastic SIS system studied in \cite[p. 880]{GGHMP11}. Consider the vector fields
 $$
 Y_1=\frac{\partial}{\partial I},\qquad Y_2=I\frac{\partial}{\partial I},\qquad Y_3=I^2\frac{\partial}{\partial I},
 $$
 which span a three-dimensional Lie algebra $V_R$ of vector fields with commutation constants
 $$
 [Y_1,Y_2]=Y_2,\qquad [Y_1,Y_3]=2Y_2,\qquad [Y_2,Y_3]=Y_3.
 $$
 In fact, this Lie algebra is isomorphic to $\mathfrak{sl}_2$, namely the matrix Lie algebra of traceless $2\times 2$ matrices with real coefficients. 
 Moreover, the operator $\mathcal{I}$ is such that its components can be written as linear combinations with $t$-dependent coefficients of vector fields of $V_R$ in the form
 $$
 \mathcal{I}(t,\mathcal{B},I)=(50Y_2-1/2Y_3,100\sigma(t) Y_2-\sigma(t) Y_3).
 $$
Hence, one notes that \eqref{Eq:PseudoSto} looks like a stochastic Lie system, but we have to recall that the form \eqref{Eq:STL} must appear in the Stratonovich form of our stochastic differential equations. Nevertheless, one has that \eqref{Eq:PseudoSto} is a stochastic differential equation in the It\^o form related to the Stratonovich stochastic differential  equation $\delta I=\mathfrak{S}(t,\mathcal{B},I)\circ\delta B$ of the form
\begin{equation}\label{Eq:StrSIS}
\delta I=\left(-\sigma(t)I^3+\left(-\frac 12+150\sigma^2(t)\right)I^2+(50-5000\sigma^2(t))I\right)\delta t+\sigma(t)(100-I)I\circ \delta \mathcal{B}\,.
\end{equation}
The problem is that for different values of $\sigma(t)$, which is not constant by assumption, the first component of $\mathfrak{S}(t,\mathcal{B},I)$, namely
$$
 -\sigma(t)I^3+\left(-\frac 12+150\sigma^2(t)\right)I^2+(50-5000\sigma^2(t))I 
$$
is a $t$-dependent vector field whose values at different values of $t\in \mathbb{R}$ generate the vector space $E=\langle Z_1=I\partial/\partial I, Z_2=I^2\partial/\partial I, Z_3=I^3\partial/\partial I\rangle$, which cannot be described by the Vessiot–Guldberg Lie algebra $\langle Y_1,Y_2,Y_3\rangle$ nor by any other. Indeed, the elements of $E$ cannot be written as a linear combination of elements of a finite-dimensional Lie algebra. In fact, the successive Lie brackets  
$$
[Z_2,Z_3]=I^4\frac{\partial}{\partial I},\qquad[Z_2,[Z_2,Z_3]]=2I^5\frac{\partial}{\partial I},\qquad[Z_2,[Z_2,[Z_2,Z_3]]]=3\cdot 2I^6\frac{\partial}{\partial I},\qquad \ldots
$$
span an infinite family of linearly independent vector fields on $\mathbb{R}$ that must be included in any Lie algebra containing $E$.  
\demo
\end{example}

There are  It\^{o} stochastic differential equations whose coefficients match the form of the coefficients in \eqref{Eq:STL} and they are still stochastic Lie systems. This is due to the fact that the transformation \eqref{Eq:TranItoStratonovich} maps the initial \eqref{Eq:practicalSDE} into a new stochastic differential equation that retains again the condition \eqref{Eq:STL}. Notwithstanding, this is not the general case.

The following result will be of utility for applications.

\begin{definition} We call a {\it $t$-dependent stochastic Riccati differential equation}  the Stratonovich stochastic differential equation on $\mathbb{R}$ with stochastic variables given by the semi-martingales  $\mathcal{B}^2,\ldots,\mathcal{B}^\ell:\mathbb{R}_+\times \Omega\rightarrow \mathbb{R}$ of the form 
\begin{equation}\label{eq:NF_Stochastic_Riccati_Equation}
\delta \Gamma=\left(\sum_{\alpha=0}^2b_\alpha(t)\Gamma^\alpha\right)\delta t+\sum_{\beta=2}^\ell\sum_{\alpha=0}^2 b_{\beta\alpha}(t) \Gamma^\alpha\circ \delta \mathcal{B}^\beta,\qquad \forall \Gamma\in \mathbb{R},\qquad \forall t\in \mathbb{R},
\end{equation}
for arbitrary $t$-dependent functions $b_\alpha(t),b_{\mu\alpha}(t)$ with $\alpha=0,1,2$ and $\mu=2,\ldots,\ell$. Observe that $\Gamma^\alpha$ here denotes the stochastic variable $\Gamma$ raised to the $\alpha$-th power.
\end{definition}
Note that $t$-dependent stochastic Riccati differential equations are stochastic Lie systems. The following proposition is immediate.
\begin{proposition}
An It\^{o} differential equation of the form 
$$
\delta \Gamma=\left(\sum_{\alpha=0}^2b_\alpha(t)\Gamma^\alpha\right)\delta t+\sum_{\beta=2}^\ell\sum_{\alpha=0}^1 b_{\beta\alpha}(t) \Gamma^\alpha \delta \mathcal{B}^\beta ,\qquad \forall \Gamma\in \mathbb{R},\qquad \forall t\in \mathbb{R}
$$
for arbitrary $t$-dependent functions $b_\alpha(t),b_{\mu\alpha}(t)$ with $\alpha=0,1,2$ and $\mu=2,\ldots,\ell$
is also a $t$-dependent  stochastic Riccati differential equation.    
\end{proposition}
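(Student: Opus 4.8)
The plan is to exploit the fact that the noise-free stochastic Riccati differential equation \eqref{eq:NF_Stochastic_Riccati_Equation} is, by definition, written in the Stratonovich interpretation, whereas the equation in the statement is prescribed in the It\^o form. Hence it suffices to pass from the given It\^o equation to its equivalent Stratonovich equation and to check that the resulting drift and diffusion coefficients are again polynomials in $\Gamma$ of the degrees admitted in \eqref{eq:NF_Stochastic_Riccati_Equation}. Since formula \eqref{Eq:TranItoStratonovich} expresses the It\^o drift in terms of the Stratonovich data, I would read it as a relation between the two drifts and invert it: the Stratonovich drift is obtained from the It\^o drift by subtracting the correction term $\tfrac12\sum_{\beta=2}^\ell\sum_{j=1}^n(\partial\mathfrak{S}_\beta^i/\partial\Gamma^j)\mathfrak{S}_\beta^j$, while the coefficients accompanying the $\delta\mathcal{B}^\beta_t$ are left untouched (recall that the It\^o and Stratonovich diffusion coefficients coincide), so the correction can be computed directly from the given data.

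Next I would specialise to the present setting, where the manifold is $\mathbb{R}$, so that $n=1$ and all indices $i,j$ collapse. Writing the It\^o diffusion coefficients as $\mathfrak{S}_\beta(t,\Gamma)=b_{\beta 0}(t)+b_{\beta 1}(t)\Gamma$ for $\beta=2,\ldots,\ell$, their $\Gamma$-derivatives are the constants $b_{\beta 1}(t)$, and therefore the correction term equals
$$
\frac12\sum_{\beta=2}^\ell b_{\beta 1}(t)\bigl(b_{\beta 0}(t)+b_{\beta 1}(t)\Gamma\bigr),
$$
which is a polynomial in $\Gamma$ of degree at most one. Subtracting it from the It\^o drift $\sum_{\alpha=0}^2 b_\alpha(t)\Gamma^\alpha$ leaves a polynomial in $\Gamma$ of degree at most two, with only the constant and linear $t$-dependent coefficients modified and the quadratic coefficient $b_2(t)$ unchanged.

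Finally I would collect the outcome: the equivalent Stratonovich equation has a drift of the form $\sum_{\alpha=0}^2\widetilde b_\alpha(t)\Gamma^\alpha$ and diffusion coefficients $b_{\beta 0}(t)+b_{\beta 1}(t)\Gamma$, i.e.\ polynomials of degree at most two and at most one respectively, which is precisely the admissible form in \eqref{eq:NF_Stochastic_Riccati_Equation} (the missing quadratic diffusion coefficients being simply zero). This establishes the claim. The only point requiring care --- and the sole potential obstacle --- is the verification that the It\^o-to-Stratonovich correction does not raise the polynomial degree of the drift above two; this is guaranteed because the diffusion coefficients have degree one in $\Gamma$, so that the product of such a coefficient with its derivative stays of degree one, comfortably within the Riccati bound.
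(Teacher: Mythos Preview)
Your argument is correct and is exactly the computation the paper has in mind when it declares the proposition ``immediate'': one passes from the It\^o to the Stratonovich form via \eqref{Eq:TranItoStratonovich}, observes that the correction term is at most linear in $\Gamma$ because the diffusion coefficients are, and concludes that the Stratonovich drift remains a quadratic polynomial while the diffusion coefficients are unchanged and hence fit (with vanishing quadratic part) into the form \eqref{eq:NF_Stochastic_Riccati_Equation}.
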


 It is worth noting that the theory of Lie systems can be generalised to many different realms \cite{Dissertationes,LS21}. In particular, there is the theory of foliated Lie systems \cite{CGM00}. This suggests the following generalisation.

\begin{definition}[Foliated stochastic Lie system]
    A {\it stochastic foliated Lie system} is a stochastic system of differential equations on $M$ of the form
  \begin{equation}
    \delta \Gamma = \mathfrak{S}(B, \Gamma)\circ  \delta B\,, \label{eq1bis2}
  \end{equation}
  where $B:\R_+ \times \Omega \rightarrow \mathbb{R}^\ell$ is an 
  $\mathbb{R}^\ell$-valued semi-martingale and $\mathfrak{S}(B, \Gamma) : \T_B \mathbb{R}^\ell
  \to \T_\Gamma M$ is a Stratonovich operator such that 
\begin{equation}\label{decomposition with the Ys}
    \mathfrak{S}_j (B, \Gamma) = \sum_{\alpha = 1}^r b_j^\alpha (B,\Gamma) Y_\alpha (\Gamma)\,,\qquad j=1,\ldots,\ell\,,\qquad \forall B\in \mathbb{R}^\ell,\qquad \forall \Gamma\in M,
    \end{equation}
and the vector fields $\{ Y_1, \ldots, Y_r \}$ on $M$ span an $r$-dimensional Vessiot--Guldberg Lie algebra such that
$b^\alpha_j(B,\Gamma)$, with $\alpha=1,\ldots,r$ and $j=1,\ldots,\ell$, are first integrals of the vector fields $Y_1,\ldots,Y_r$ for every fixed $B\in \mathbb{R}^\ell$. 
\end{definition}

There are many potential applications concerning the generalisation to the stochastic realm of famous types of Lie systems such as matrix, projective Riccati equations, Bernouilli equations, and so on. As illustrated in this work, stochastic Lie systems have  applications in predator-prey models, oscillator type models, et cetera \cite{Ar03,HH11,Sa70}.  Moreover, nonlinear stochastic differential equations are difficult to study. Notwithstanding, under certain conditions, their linearisation can describe their equilibrium properties \cite{Ar03}. Linear or even affine stochastic differential equations with stochastic variables related to semi-martingales are stochastic Lie systems (cf. \cite{LO09}).

\section{Superposition rules and stochastic Lie systems}\label{Sec:SSRandLT}

Let us study the superposition rule notion for stochastic differential equations and the characterisation of stochastic differential equations admitting a superposition rule. This will lead us to review and slightly correct some mistakes in the main theorem in \cite{LO09}. 

\begin{definition}\label{def 1}
    A {\it superposition rule} for a Stratonovich stochastic differential equation of the form~\eqref{stochastic differential equation expression} on a manifold $M$ is a function $\Phi: M^{m + 1} \to M$ such that, for a generic set $\Gamma_1, \ldots, \Gamma_m:\mathbb{R}_+\times \Omega\rightarrow M$, of particular solutions of~\eqref{stochastic differential equation expression}, the general solution $\Gamma$ to~\eqref{stochastic differential equation expression} takes the form
    \[ \Gamma = \Phi (z
    ; \Gamma_1, \ldots, \Gamma_m), \]
    where $z\in M$ is a point to be related to initial conditions. 
\end{definition}

It is remarkable that superposition rules for stochastic differential equations do not depend on  $\mathbb{R}^\ell$. Let us introduce now several concepts that will be useful to describe and calculate superposition rules for stochastic Lie systems.

The {\it diagonal prolongation to $M^k$ of a vector bundle} $\tau:F\to M$  is the vector bundle $\tau^{[k]}:F^k = F\times\overset{(k)}{\dotsb}\times F\mapsto M^k= M\times \overset{(k)}{\dotsb} \times M$, of the form
\begin{equation*}
   \tau^{[k]}(f_{(1)},\dotsc,f_{(k)}) = (\tau(f_{(1)}),\dotsc,\tau(f_{(k)}))\,,\qquad \forall f_{(1)},\ldots,f_{(k)}\in F\,, 
\end{equation*}
with fibers of the form
\begin{equation}\label{Eq:Fiber}    F^k_{(x_{(1)},\ldots,x_{(k)})}=F_{x_{(1)}}\oplus\cdots\oplus F_{x_{(k)}}\,,\qquad \forall (x_{(1)},\ldots,x_{(k)})\in M^k\,.
\end{equation}

Every section $e:M\to F$ of the vector bundle $\tau$ has a natural {\it  diagonal prolongation} to a section $e^{[k]}$ of the vector bundle $\tau^{[k]}$ given by
\begin{equation*}
    e^{[k]}(x_{(1)},\ldots,x_{(k)})=(e(x_{(1)}),\cdots ,e(x_{(k)}))\,,\qquad \forall (x_{(1)},\ldots,x_{(k)})\in M^k.
\end{equation*}
If we consider that every $e(x_{(a)})$ takes values in the $a$-th copy of $F$ within \eqref{Eq:Fiber}, one can write
\begin{equation*}     e^{[k]}(x_{(1)},\ldots,x_{(k)})=e(x_{(1)})+\cdots +e(x_{(k)})\,,\qquad \forall (x_{(1)},\ldots,x_{(k)})\in M^k,
\end{equation*}
which is a simple useful notation for applications. The {\it  diagonal prolongation of a function $f\in \Cinfty (M)$} to $M^k$ is the function on $M^k$ given by 
$$f^{[k]}(x_{(1)},\ldots,x_{(k)})= f(x_{(1)})+\ldots+f(x_{(k)})\,.
$$
Consider also the sections $e^{(j)}$ of $\tau^{[k]}$, where $j\in \{1,\ldots,k\}$ and $e$ is a section of $\tau$, given by
\begin{equation}\label{prol1}
e^{(j)}(x_{(1)},\dots,x_{(k)})=0+\cdots +e(x_{(j)})+\cdots+0\,,\qquad \forall (x_{(1)},\ldots,x_{(k)})\in M^k\,.
\end{equation}
If $\{e_1,\ldots, e_r\}$ is a basis of local sections of the vector bundle $\tau$, then $e_i^{(j)}$, with $j = 1,\dotsc,k$ and $i = 1,\dotsc,r$, is a basis of local sections of $\tau^{[k]}$. For simplicity, we will frequently write $e(x_{(j)})$ instead of $e^{(j)}$ if it is clear what we mean.

Due to the obvious canonical isomorphisms
$$ (\T M)^{[k]}\simeq \T M^k\quad\text{and}\quad (\cT M)^{[k]}\simeq \cT M^k\,, $$
the diagonal prolongation $X^{[k]}$ of a vector field $X\in\X(M)$ can be understood as a vector field $ {X}^{[k]}$ on $M^k$, and the diagonal prolongation, $\alpha^{[k]}$, of a one-form $\alpha$ on $M$ can be understood as a one-form $ {\alpha}^{[k]}$ on $M^k$.  

More explicitly, let $Y$ be a vector field on a manifold $M$. The {\it diagonal prolongation} of $Y$ to $M^k$ is the vector field  
\[ 
    {Y}^{[k]}(x_{(1)},\ldots,x_{(k)})=\sum_{a=1}^kY(x_{(a)})
\]
on $M^k$ obtained by considering $\T M^k
\simeq \T M\times \dotsb \times \T M$ ($k$ times). Even more particularly, if $Y=x\frac{\partial}{\partial x} $ is a vector field on $\mathbb{R}$, then $Y^{[k]}=\sum_{a=1}^kx_{(a)}\frac{\partial}{\partial x_{(a)}} $.

The diagonal prolongation of a vector field  can be extended to $t$-dependent vector fields on $M$, namely mappings $X\colon\mathbb{R}\times M\rightarrow \T M$ such that $X(t,\cdot)$ is a standard vector field on $M$, by assuming that the diagonal extension to $M^k$ of the $t$-dependent vector field $X$ on $M$ is the $t$-dependent vector field $\widetilde{X}^{[k]}$ on $M^k$ whose value for every fixed valued of $t$, let us say $\widetilde{X}^{[k]}_t$, is the diagonal prolongation to $M^k$ of the vector field $X_t$. The space of diagonal prolongations of vector fields in $\mathfrak{X} (M)$ to $M^k$ form a Lie subalgebra of $\mathfrak{X}
(M^k)$. In fact, the mapping $X\in \X(M)\mapsto {X}^{[k]}\in \X(M^{k})$ is a Lie algebra morphism, i.e. it is a linear mapping such that
\begin{equation}
  \label{eq 14} [Y_1,Y_2]^{[k]}=\left[{Y}^{[k]}_1, {Y}^{[k]}_2\right],\qquad \forall Y_1,Y_2\in \mathfrak{X}(M)\,.
  \end{equation}

Let us solve a mistake in the proof of the stochastic Lie theorem in \cite{LO09}. The issue appears in the direct part of the statement \cite[p. 215, Theorem 3.1]{LO09}. In particular, this makes \cite[Remark 3.3.(i)]{LO09} incorrect. To start with, just recall a couple of notions on differential geometry to make our presentation more accessible for researchers working on stochastic differential equations or applications. 

A {\it generalised distribution} $D$ on a manifold $M$ is a correspondence mapping every point $p\in M$ to a subspace $D_p\subset T_pM$. A generalised distribution $D$ is {\it smooth} if, around every point $p\in M$, there exists an open subset $U\ni p$ and vector fields $X_1,\ldots,X_s$ on $U$ such that $D_{p'}=\langle X_1(p'),\ldots,X_s(p')\rangle$ for every $p'\in U$. It is worth noting that the number $s$ may depend on the point $p$. A generalised distribution $D$ is {\it involutive} if for every pair of vector fields $X_1,X_2$ on an open subset $U\subset M$ such that $X_1(p),X_2(p)\in D_p$ for every $p\in U$, we have that $[X_1,X_2](p)\in D_p$ at every $p\in U$. In other words, a generalised distribution is involutive if the Lie bracket of vector fields taking values in the distribution takes values in the distribution too. More practically, it can be proved that a generalised distribution $D$ on $M$ is involutive if for every point $p\in M$ there exists a family of vector fields $X_1,\ldots,X_s$ spanning the distribution on an open neighbourhood $U$ of  $p$, namely $D_{p'}=\langle X_1(p'),\ldots,X_s(p')\rangle$ for $p'\in U$, satisfy that the Lie brackets $[X_i,X_j]$, with $1\leq i<j\leq s$, also take values in $D$ on $U$, namely 
$[X_i,X_j](p')\in D_{p'}$ for every $p'\in U$ and $1\leq i<j\leq s$.
In this work, we have said `distributions' instead of  `generalised distributions' to simplify our terminology, as commonly done in the literature. Moreover, all inspected distributions  are smooth.

For the sake of completeness, we will state that part of the work adopting our notation and writing in full the contents referenced by the labels used in \cite[Theorem 3.1]{LO09}. We hereafter refer to
\begin{quote}\it
    ``Moreover, the fact that one has the general property
    $$
    \left[ {Z}^{[k+1]}_1,(Z_2+\lambda Z_3)^{[k+1]}\right]=([Z_1,Z_2+\lambda Z_3])^{[k+1]},\qquad \forall Z_1,Z_2,Z_3\in \mathfrak{X}(M),\qquad \lambda\in \mathbb{R},
    $$
    and the hypothesis on $\{Y_1,\dotsc, Y_\kappa\}$ that 
    they span an involutive distribution\footnote{In \cite[Theorem~3.1]{LO09}, it is stated that $\{Y_1,\ldots,Y_\kappa\}$ span an involutive distribution, i.e. $[Y_i,Y_j]=\sum_{k=1}^\kappa f_{ijk}Y_k$ for suitable functions $f_{ijk}$ with $i,j,k=1,\ldots,r$. The proof also notes that $\{Y_1,\ldots,Y_r\}$ are `in involution,' again meaning they span an involutive distribution. This terminology may be misleading, as in some references `vector fields in involution' is used to mean that the vector fields commute.} imply by the classical Fr\"obenius theorem that 
    $$
    D=\left\langle {Y}^{[k+1]}_1,\ldots,{Y}^{[k+1]}_\kappa\right\rangle
    $$
    is integrable.''
\end{quote}
which is incorrect. Let us explain the mistake and provide a counterexample. Our counterexample will be relevant because it will show that certain stochastic differential equations in the It\^{o} approach are not stochastic Lie systems and do not admit a superposition rule.  

The problem relies on the fact that if $Y_1,\dotsc,Y_\kappa$ span an involutive distribution on $M$, then the diagonal prolongations $\widetilde Y^{[k+1]}_1,\ldots,\widetilde Y^{[k+1]}_\kappa$ do not need to span the distribution given by the diagonal prolongations of the vector fields taking values in the  distribution spanned by $Y_1,\ldots,Y_\kappa$. In fact, as shown next, the distribution spanned by $\widetilde Y^{[k+1]}_1,\ldots,\widetilde Y^{[k+1]}_\kappa$ need not be involutive at all.

For instance, consider the two vector fields on $\mathbb{R}_\circ=\mathbb{R}\setminus\{0\}$ given by
$$
Y_1=x^2\frac{\partial}{\partial x}\,,\qquad Y_2=x^3\frac{\partial}{\partial x}
$$
that span an involutive distribution on $\mathbb{R}_\circ=\{x\in \mathbb{R}|x\neq 0\}$. Indeed, both vector fields span a distribution $\mathcal{D}_x=\langle x^2\partial/\partial x,x^3\partial/\partial\rangle=\T_x\mathbb{R}_\circ$ for $x\in \mathbb{R}_\circ$ and every two vector fields taking values in $\T\mathbb{R}_\circ$ have a Lie bracket contained in  $\T\mathbb{R}_\circ$.   
Notwithstanding, their diagonal prolongations, ${Y}^{[s]}_1, {Y}^{[s]}_2$, to $\mathbb{R}_\circ^s$ with $s>2$ do not need to span an involutive distribution. Indeed, one has the diagonal prolongations
$$
 {Y}_1^{[s]}=\sum_{a=1}^sx_{(a)}^2\frac{\partial}{\partial x_{(a)}},\qquad  {Y}_2^{[s]}=\sum_{a=1}^s x_{(a)}^3\frac{\partial}{\partial x_{(a)}}
$$
on $(\mathbb{R}_\circ)^s$. Their successive commutators become, as stated in \cite{LO09}, diagonal prolongations of an element of the involutive distribution spanned by $Y_1,Y_2$, namely $\T \mathbb{R}_\circ$. In particular,
\[
\ad_{ {Y}^{[s]}_1}^k {Y}^{[s]}_2=\stackrel{k-{\rm times}}{\overbrace{\left[ {Y}_1^{[s]},[
\ldots,[\ldots,[\ldots, [ {Y}_1^{[s]}\right.}}\left., {Y}_2^{[s]}]\ldots]\ldots]\ldots]\right]=
\sum_{a=1}^sk!x_{(a)}^{3+k}\frac{\partial}{\partial x_{(a)}}=\left(k!x^{k+3}\frac{\partial}{\partial x}\right)^{[s]},
\]
for $k=1,2,\ldots$
Notwithstanding,  $\ad_{ {Y}^{[s]}_1}^k {Y}^{[s]}_2$, with $k\in \{2,3,4,\ldots\}$, does not take values  in the distribution $\mathcal{D}=\left\langle  {Y}^{[s]}_1, {Y}^{[s]}_2\right\rangle$ at a generic point of $\mathbb{R}_\circ^s$. Even worse, the smallest  (in the sense of inclusion) involutive distribution containing $ {Y}^{[s]}_1, {Y}^{[s]}_2$ spans the whole tangent space to $\mathbb{R}_\circ^s$ at almost every point and a superposition rule for a system described by a generic combination of $Y_1,Y_2$ does not exist as it must be constructed from the non-constant first integrals of the vector fields taking values in an involutive distribution containing  $ {Y}^{[s]}_1, {Y}^{[s]}_2$ for some $s>2$ (see the proof for the Lie theorem in \cite{CGM07,Dissertationes,LS21,LO09}).  In fact, the vector fields $ {Y}^{[s]}_{\mu}=\sum_{a=1}^sx_{(a)}^{\mu+1}\frac{\partial}{\partial x_{(a)}}$, with $\mu=1,\ldots,s$, on $\mathbb{R}_\circ^s$ are linearly independent almost everywhere. To verify this fact, it is enough to see that the determinant of their coefficients in the basis $\frac{\partial}{\partial x_{(a)}}$, with $a=1,\ldots,s$, read
$$
    \begin{vmatrix}
        x_{(1)}^2 & x_{(2)}^2 & \cdots & x_{(s)}^2 \\
        x_{(1)}^3 & x_{(2)}^3 & \cdots & x_{(s)}^3 \\
        \vdots & \vdots & & \vdots \\
        x_{(1)}^{s+1} & x_{(2)}^{s+1} & \cdots & x_{(s)}^{s+1}
    \end{vmatrix} = x_{(1)}^2\dotsb x_{(s)}^2 \begin{vmatrix}
        1 & 1 & \cdots & 1 \\
        x_{(1)} & x_{(2)} & \cdots & x_{(s)} \\
        \vdots & \vdots & & \vdots \\
        x_{(1)}^{s-1} & x_{(2)}^{s-1} & \cdots & x_{(s)}^{s-1}
    \end{vmatrix} = \prod_{a = 1}^s x_{(a)}^2 \prod_{1\leq i < j \leq s} (x_{(j)} - x_{(i)})\,.
$$
which causes the smallest involutive generalised distribution containing $ {Y}^{[s]}_1, {Y}^{[s]}_2$ on $\mathbb{R}_\circ^s$ to be equal to $\T \mathbb{R}_\circ^s$ almost everywhere. This makes the existence of common non-constant first integrals for $ {Y}^{[s]}_1, {Y}^{[s]}_2$, which will be common non-constant first integrals for all $Y^{[s]}_1,Y^{[s]}_2,\ldots$  impossible.

The previous counterexample is very important as it concerns the stochastic generalisations of the so-called Abel equations (see \cite{Dissertationes} and references therein). Hence, the mistake in \cite{LO09} has potential practical consequences. Moreover, recall that the SIS model in the It\^{o} form \eqref{Eq:PseudoSto}, for a non-constant function $\sigma(t)$, takes the Stratonovich form \eqref{Eq:StrSIS}. We already showed that if one tries to write the $t$-dependent coefficient of the related Stratonovich operator for $\delta t$ as a linear combination with $t$-dependent functions of a family of vector fields on $\mathbb{R}$ spanning a finite-dimensional Lie algebra, one finds that, for a generic $t$-dependent function $\sigma(t)$, one has to obtain a finite-dimensional Lie algebra of vector fields on $\mathbb{R}$ containing $x^2\frac{\partial}{\partial x},x^3\frac{\partial}{\partial x}$, which is impossible as shown in Example \ref{Ex:FirstItoStr}.

Despite the above mistake, the stochastic Lie--Scheffers theorem in \cite{LO09} can be corrected by assuming that the initial family of vector field $Y_1,\ldots,Y_r$ close an $r$-dimensional Lie algebra. Note that the assumption that $Y_1,\ldots,Y_r$ are linearly independent over $\mathbb{R}$ is necessary. Otherwise, e.g. if $Y_{r-1}=Y_r$, one obtains that the vector fields $Y_1^{[m]},\ldots,Y_r^{[m]} $ are always linearly dependent and Lemma 3.2 of \cite{LO09} cannot be applied in \cite[p. 918]{LO09}. On the other hand, if $Y_1,\ldots,Y_r$ are linearly independent, their diagonal prolongations become linearly independent at a generic point exactly when $m$ is such that they span a distribution of rank $r$ at a generic point (cf. \cite{Dissertationes}). Then, Lemma 3.2 can be applied normally as in \cite[p. 918]{LO09}. 

\begin{theorem}
  [Stochastic Lie theorem]\label{teorema Lie}Let
  \begin{equation}
    \delta \Gamma = \mathfrak{S}(B, \Gamma) \circ\delta B \label{eq1bis}
  \end{equation}
  be a stochastic differential equation on $M$, where $B :
  \mathbb{R}_+ \times \Omega \rightarrow \mathbb{R}^\ell$ is a given
  $\mathbb{R}^\ell$-valued semi-martingale and $\mathfrak{S}(B, \Gamma) : \T_B \mathbb{R}^\ell
  \to \T_\Gamma M$, for every $B
\in \mathbb{R}^\ell$ and $\Gamma\in M$, is a Stratonovich operator from
  $\R^\ell$ to $M$. Then, \eqref{eq1bis} admits a superposition rule if and only if
\begin{equation}\label{decomposition with the Ys2} \mathfrak{S}_j (B, \Gamma) = \sum_{\alpha = 1}^r b_j^\alpha (B)
      Y_\alpha (\Gamma)\,,\qquad j=1,\ldots,\ell\,,
    \end{equation}
    for every  $B\in \mathbb{R}^\ell$ and $\Gamma\in M$, where the vector fields $\{ Y_1, \ldots, Y_r \}$ span an $r$-dimensional Vessiot--Guldberg Lie algebra  on $M$.
\end{theorem}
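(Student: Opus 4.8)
The plan is to adapt the classical proof of the Lie--Scheffers theorem to the stochastic setting by means of the diagonal prolongation machinery, exploiting the crucial observation (made just after Definition~\ref{def 1}) that a superposition rule for \eqref{eq1bis} depends neither on the semi-martingale $X$ nor on the probability space. Writing $\mathfrak{S}_1(X,\cdot),\ldots,\mathfrak{S}_\ell(X,\cdot)$ for the vector fields on $M$ determined by the Stratonovich operator, I let $W$ be the real linear span of the family $\{\mathfrak{S}_j(X,\cdot): X\in\mathbb{R}^\ell,\ j=1,\ldots,\ell\}$ and $V$ the Lie algebra of vector fields it generates. The whole statement then reduces to the equivalence that \eqref{eq1bis} admits a superposition rule if and only if $\dim V<\infty$; once this is known, any basis $Y_1,\ldots,Y_r$ of $V$ yields \eqref{decomposition with the Ys2} with smooth coefficients $b_j^\alpha(X)$ recovered as the components of $\mathfrak{S}_j(X,\cdot)\in V$. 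The bridge I would use throughout is the standard characterisation: a superposition rule with $m$ particular solutions is equivalent to the existence, on a dense open subset of $M^{m+1}$, of $n=\dim M$ functionally independent functions $F_1,\ldots,F_n$ that are common first integrals of the diagonal prolongations and whose differentials remain independent when restricted to the first factor, so that $F_i(\Gamma;\Gamma_1,\ldots,\Gamma_m)=z_i$ can be solved for $\Gamma$.

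For the sufficiency, I would start from \eqref{decomposition with the Ys2} with $V=\langle Y_1,\ldots,Y_r\rangle$ finite dimensional and pass to the diagonal prolongations $Y_1^{[m+1]},\ldots,Y_r^{[m+1]}$ on $M^{m+1}$. Here the correction to \cite{LO09} enters decisively: by the Lie algebra morphism property \eqref{eq 14}, the prolongations again span a finite-dimensional Lie algebra $V^{[m+1]}\cong V$, hence an involutive distribution --- this is exactly the step that fails for a family that is merely involutive rather than a finite-dimensional Lie algebra. Taking $m$ with $mn\geq\dim V$, the Frobenius theorem supplies $(m+1)n-\dim V\geq n$ functionally independent common first integrals, among which $n$ can be chosen transverse to the first factor at a generic point. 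Since every $\mathfrak{S}_j(X,\cdot)$ is an $X$-dependent constant-coefficient combination of the $Y_\alpha$, these first integrals are automatically annihilated by each $\mathfrak{S}_j(X,\cdot)^{[m+1]}$, so the resulting superposition rule is independent of $X$, as required.

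For the necessity I would run the argument in reverse and then confront the genuine obstacle. Given an $X$-independent superposition rule with $m$ solutions, the associated $F_1,\ldots,F_n$ are common first integrals of every $\mathfrak{S}_j(X,\cdot)^{[m+1]}$; a one-line bracket computation $[A,B]F_i=A(BF_i)-B(AF_i)=0$ shows they are in fact first integrals of all iterated brackets, so $V^{[m+1]}\subseteq\mathcal{D}:=\ker(\d F_1,\ldots,\d F_n)$, a distribution of rank $mn$. The main difficulty --- precisely the point mishandled in \cite{LO09}, where involutivity alone was extracted --- is to upgrade this to $\dim V<\infty$. My plan is to use transversality to the first factor: solvability of $F_i=z_i$ for the first argument forces $\mathcal{D}_p$ to project isomorphically onto the tangent space of the last $m$ copies at a generic $p=(p_{(0)},\ldots,p_{(m)})$. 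Consequently, if some $v\in V$ vanishes at $p_{(1)},\ldots,p_{(m)}$, then $(v(p_{(0)}),0,\ldots,0)\in\mathcal{D}_p$ forces $v(p_{(0)})=0$; letting $p_{(0)}$ range over a dense set gives $v\equiv 0$. Hence the evaluation $v\in V\mapsto(v(p_{(1)}),\ldots,v(p_{(m)}))$ is injective and $\dim V\leq mn<\infty$, which closes the equivalence.

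The hard part is the finite-dimensionality step just described, and it is instructive to read it against the counterexample preceding the theorem: for $Y_1=x^2\partial_x$, $Y_2=x^3\partial_x$ the generated Lie algebra is infinite dimensional, the prolongations to $\mathbb{R}_\circ^s$ fill $\T\mathbb{R}_\circ^s$ by the Vandermonde computation, no common first integral survives, and the transversality I rely on collapses --- exactly reflecting the absence of a superposition rule. The remaining points are routine: checking that $m$ can indeed be chosen to make the first integrals transverse to the first factor at generic points (using that $V$ acts identically on each copy), verifying smoothness of the coefficients $b_j^\alpha$, and confirming that passing between the $\ell$ Stratonovich components and the single generated Lie algebra $V$ does not alter the Vessiot--Guldberg structure.
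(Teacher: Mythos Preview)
Your sufficiency argument is essentially the paper's own proof: use the Lie algebra morphism property \eqref{eq 14} of diagonal prolongations so that $Y_1^{[m+1]},\ldots,Y_r^{[m+1]}$ span an \emph{integrable} distribution (this is precisely the step where the finite-dimensional Lie algebra hypothesis, rather than mere involutivity, is essential), then invoke Frobenius and extract a superposition rule. The paper frames the last step slightly differently---it extends the rank-$r$ distribution $\mathcal{D}_0$ to an integrable distribution $\mathcal{D}$ of corank $n$ whose leaves project diffeomorphically onto $M^m$---but this is equivalent to your selection of $n$ first integrals transverse to the zeroth factor.

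Where you and the paper diverge is on the necessity direction. The paper does \emph{not} prove it at all: it simply remarks that the converse is already established in \cite{LO09} and restricts attention to correcting the faulty direct part. You, by contrast, supply a self-contained argument---the classical evaluation-injectivity trick: from the transversality condition $\partial(F_1,\ldots,F_n)/\partial(x_{(0)}^1,\ldots,x_{(0)}^n)\neq 0$ deduce that $\mathcal{D}_p$ meets the zeroth factor trivially, hence any $v\in V$ vanishing at the generic points $p_{(1)},\ldots,p_{(m)}$ must vanish identically, giving $\dim V\le mn$. This is correct and is indeed the standard Lie--Scheffers argument; it makes your write-up more self-contained than the paper's, at the cost of reproving material the authors chose to cite. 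One small caveat: your choice ``$m$ with $mn\ge\dim V$'' is only a necessary condition for the prolongations to become pointwise independent; the paper's phrasing (the smallest $m$ for which $Y_1^{[m]},\ldots,Y_r^{[m]}$ reach rank $r$ at a generic point) is the operative one, though you flag this as routine at the end.
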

\begin{proof} Let us prove the part of the direct implication of \cite[Theorem 3.1]{LO09}  that requires some comments in light of our correction. There always exists a number $m$ such that the diagonal prolongation of a basis $Y_1,\ldots,Y_r$ of the Vessiot--Guldberg Lie algebra of the Lie system to $M^m$ reaches rank $r$ at a generic point. It was proved in \cite{Dissertationes} that this happens exactly when the diagonal prolongations span a distribution of rank $r$ almost everywhere. Let be $\mathcal{D}_0$ the distribution  spanned by vector fields $Y^{[m+1]}_1,\ldots,Y^{[m+1]}_r$. Then $\mathcal{D}_0$ has constant rank on an open subset of $M^{m+1}$. Since the Lie brackets $[Y_i,Y_j]$ are linear combinations of the vector fields of $V$, with constant coefficients, then their diagonal prolongations to $M^{m+1}$ are linear combinations, with constant coefficients, of the $Y^{[m+1]}_1,\ldots,Y^{[m+1]}_r$ and $\mathcal{D}_0$ is integrable. Then, one can extend $\mathcal{D}_0$ to an integrable distribution $\mathcal{D}$ of corank $n$ in $M^{m+1}$, i.e. $\mathcal{D}_0\subset \mathcal{D}$ on every point of an open neighbourhood of a point in $M^{m+1}$. This can always be done in such a manner that the leaves of $\mathcal{D}$ project diffeomorphically onto an open subset of $M^m$ via the canonical projection $\pi:(\Gamma_{(1)},\ldots,\Gamma_{(m+1)})\in M^{m+1}\mapsto(\Gamma_{(1)},\ldots,\Gamma_{(m)})\in M^m$. The obtained expression is a local superposition rule for \eqref{eq1bis} as shown in \cite[Theorem 3.1]{LO09} in basis of \cite[Proposition 2.4]{LO09}.
\end{proof}

It is important to highlight that the proof of the stochastic Lie theorem shows, like its non-stochastic analogue, that if the Vessiot--Guldberg Lie algebra has  dimension $r$, then $m$, which standard for the number of particular solutions of the associated superposition rule, must fulfils the condition $m\dim M\geq r$. This requirement is crucial to guarantee that the diagonal prolongations $Y^{[m]}_1,\ldots,Y^{[m]}_r$ are linearly independent at a generic point of $M^m$. Consequently, a larger Vessiot--Guldberg Lie algebra on a manifold $M$ is associated with a greater number of particular solutions for the superposition rule of the related stochastic Lie system, namely $m$,. On the other hand, larger Vessiot--Guldberg Lie algebras lead to superposition rules for larger families of stochastic Lie systems admitting a common superposition rule (see \cite{Dissertationes} for an exploration of these facts in the classic non-stochastic context, which is completely analogous).

\section{Hamiltonian stochastic Lie systems}\label{Sec:Hamiltonian_Stochastic_Lie_Systems}

This section studies stochastic Lie systems admitting a Vessiot--Guldberg Lie algebra of Hamiltonian vector fields relative to a geometric structure. In particular, we mainly focus on Hamiltonian vector fields relative to a symplectic form, but our analysis can be immediately extended to more general Hamiltonian systems, e.g. related to Poisson manifolds. To motivate our approach, let us consider a stochastic differential equation on $\mathbb{R}^2$ of the form
\begin{equation}\label{eq:PG}
\begin{cases}
\dfrac{dx}{d t}=x  ({a}_{1}(t,\mathcal{B})-b_{1} y),\\
\dfrac{dy}{d t}= y  ({a}_{1}(t,\mathcal{B})+b_{2} x),
\end{cases}
\end{equation}
for a function $a_1\in \Cinfty(\mathbb{R}^2)$ and constants $b_1,b_2\in \mathbb{R}$. This system is a stochastic generalisation of a type of Palomba--Goodwin model, which in turn is a particular case  of Lotka--Volterra system \cite{Goo_82,Pal_39}.
 Our stochastic generalisation is obtained by assuming that the standard $t$-dependent coefficients  depend, additionally, on a semi-martingale $\mathcal{B}$. In this case,  the associated Stratonovich operator takes the form
\[
\mathfrak{S}(t,\mathcal{B},x,y)=\begin{pmatrix} x  ({a}_{1}(t,\mathcal{B})-b_{1}y)\\y  ({a}_{1}(t,\mathcal{B})+b_{2}x)\end{pmatrix}\,,
\]
which allows us to write
$$
\delta\Gamma= \begin{pmatrix} \delta x\\\delta y \end{pmatrix} = \mathfrak{S}(t,\mathcal{B},x,y) \delta t\,.
$$
Moreover,  $\mathfrak{S}(t,\mathcal{B},x,y)$ can be written as
$$
\mathfrak{S}(t,\mathcal{B},x,y)=a_1(t,\mathcal{B})X_1+X_2\,,
$$
where the vector fields $X_1$ and $X_2$ are given by
\[
X_1=x\frac{\partial}{\partial x}+y\frac{\partial}{\partial y}\,,\qquad X_2=xy\left(-b_1 \frac{\partial}{\partial x}+b_2 \frac{\partial}{\partial y}\right).
\]
Since $
[X_1,X_2]=-X_2,
$
the vector fields $X_1,X_2$ span a non-Abelian two-dimensional Lie algebra $\mathfrak{h}_2$. Hence, system \eqref{eq:PG} is a stochastic Lie system. 

The vector fields $X_1,X_2$ are Hamiltonian relative to the symplectic structure
$\omega =\frac{1}{xy} \d x\wedge \d y$ on $\mathcal{O}=\{(x,y)\in \mathbb{R}^2:xy\neq 0\}$. In fact,
$$
\iota_{X_1}\omega=\d\ln|y/x|\,,\qquad \iota_{X_2}\omega = \d\left(-b_1y-b_2x\right).
$$
Hence, we call \eqref{eq:PG},  a {\it Hamiltonian stochastic Lie system} on $\mathcal{O}$.  
Moreover, \eqref{eq:PG} could be further generalised, e.g. by considering 
$$
\delta \Gamma=\mathfrak{S}(t,\mathcal{B},x,y)\delta t+\mathfrak{S}'(t,\mathcal{B},x,y)\circ\delta \mathcal{B},
$$
where $\mathfrak{S}'(t,\mathcal{B},x,y)=f_1(t,\mathcal{B})X_1+f_2(t,\mathcal{B})X_2$ for arbitrary functions $f_1,f_2\in \Cinfty(\mathbb{R}^2)$.

Let us analyse the stochastic SIS model in the Stratonovich approach given by \cite{CMZ_19}
\begin{align*}
\delta S&=\left[\Lambda-\mu_1S-
\frac{\beta SI}{\kappa+S}+\gamma I\right]\delta t-\sigma S\circ\delta \mathcal{B}\,,\\
\delta I&=\left[\frac{\beta SI}{\kappa+S}-(\mu_1+\gamma) I\right]\delta t-\sigma I \circ\delta \mathcal{B}\,,
\end{align*}
where $\kappa$ is a rate of disease-related
death, $\Lambda$ is an input of new members, $\mu$ is a natural mortality, $\beta$ is a transmission rate, $\gamma$ is a rate of recovery, $\sigma^2$ is the intensity of white noise, $\mu_1=\mu+\sigma^2/2$, and $\mathcal{B}$ is a Brownian motion. Let us consider the case $\kappa=0$, namely 
\begin{align}
    \delta S &= \left[\Lambda-\mu_1S+(\gamma-\beta) I\right]\delta t - \sigma S\circ\delta \mathcal{B} \notag \\
    \delta I &= \left[ -\mu_1 I-(\gamma-\beta) I\right]\delta t - \sigma I \circ\delta \mathcal{B} 
    \label{Eq:NewSIS}
\end{align}
one can consider the vector fields on $\mathbb{R}^2$ given by
$$
\bar{X}_1=  S\parder{}{S} + I\parder{}{I}\,,\quad \bar{X}_2 = I\parder{}{S} - I\parder{}{I}\,,\quad \bar{X}_3=\parder{}{S}.
$$
Then,
$$
[\bar X_1,\bar X_3]=-\bar X_3\,,\qquad [\bar X_2,\bar X_3]=0,\qquad [\bar X_1,\bar X_2]=0.
$$
Hence, the system becomes a stochastic Lie system. In fact, the Stratonovich operator reads
\begin{equation}
    \mathfrak{S}(B,\Gamma) = (\Lambda \bar X_3-\mu_1\bar X_1+(\gamma-\beta)\bar X_2, -\sigma \bar X_1)\,,
\end{equation}
where we consider solutions  $\Gamma:(t,\omega_p)\in\mathbb{R}_+\times \Omega\mapsto (S,I)\in \mathcal{U}=\{(S,I)\in \mathbb{R}^2:SI\neq 0\}$. In the particular case of $\Lambda=0$, one can consider the locally defined symplectic form, away from points with $(S+I)I\neq 0$, of the form
$$
\omega=\d \xi_1\wedge \d \xi_2
$$
where $\xi_1,\xi_2$ are local coordinates such that $\bar X_1=\frac{\partial}{\partial \xi_1}$ and $\bar X_2=\frac{\partial}{\partial \xi_2}$. The existence  of $\xi_1,\xi_2$ is due to the fact that $\bar X_1\wedge \bar X_2$ does not vanish and $[\bar X_1,\bar X_2]=0$ away from points with $(S+I)I\neq 0$. Note that system \eqref{Eq:NewSIS} can be generalised to consider $t$-dependent coefficients. 

Let us consider the stochastic differential equation on  $\mathbb{R}^n$ with a Wiener process $W_1$ (see \cite{HH11,Ha19,YFBL_17} for analogues on $\mathbb{R}$) in Stratonovich form given by
\begin{equation}\label{eq:HHOWN}
\delta\begin{pmatrix}
    x_i \\ y_i
\end{pmatrix} = \begin{pmatrix}
    \lambda(t) & a(t) \\ b(t) & -\lambda(t)
\end{pmatrix} \begin{pmatrix}
    x_i \\ y_i
\end{pmatrix} \delta t + \begin{pmatrix}
    \lambda'(t) & a'(t) \\ b'(t) & -\lambda'(t)
\end{pmatrix} \begin{pmatrix}
    x_i \\ y_i
\end{pmatrix} \circ \delta W_1\,,\qquad i=1,\ldots,n\,,
\end{equation}
 which is defined on $\cT\mathbb{R}^n$. This recovers subcases of Example \ref{eq:WhiteOscil}, namely equation \eqref{eq:HOWN} with  $k(t)=\sigma(t)=0$. It can be proved that \eqref{eq:HOWN} is not Hamiltonian relative to any symplectic form on $\mathbb{R}^2$ for arbitrary $t$-dependent coefficients as it leads to a Vessiot--Guldberg Lie algebra on the plane related to $\mathfrak{gl}_2$  \cite{LS21}. But \eqref{eq:HHOWN} can now be related to a Vessiot--Guldberg Lie algebra $V$  on $\T^*\mathbb{R}^n$ spanned by
 \begin{equation}\label{eq:StoHarOsc}
 X_1=\sum_{i=1}^n y_i\frac{\partial}{\partial x_i}
\,,\qquad X_2=\frac 12 \sum_{i=1}^n\left(x_i\frac{\partial}{\partial x_i}-y_i\frac{\partial}{\partial y_i}\right)\,,\qquad X_{3}=-\sum_{i=1}^n x_i\frac{\partial}{\partial y_i}
\,.
 \end{equation}
 Since,
 $$
 [X_1,X_2]=X_1\,,\qquad [X_1,X_3]=X_2\,,\qquad [X_2,X_3]=X_3\,,
 $$
 this Lie algebra is isomorphic to $\mathfrak{sl}_2$ and it is the diagonal prolongation to $(\T^*\mathbb{R})^n$  of a Vessiot--Guldberg Lie algebra isomorphic to $\mathfrak{sl}_2$ on $\T^*\mathbb{R}$ (see \cite{LS21}). Moreover, it is known to be a Lie algebra of Hamiltonian vector fields relative to the symplectic form
 $$
 \omega=\sum_{i=1}^n\d x_i\wedge \d y_i\,.
 $$
Hence, $X_1,X_2,X_3$ have Hamiltonian functions given by
 \begin{equation}\label{Eq:StoOscHamFun}
 h_1 = \sum_{i=1}^n \frac{y_i^2}{2}\,,\qquad h_2 = \frac{1}{2}\sum_{i=1}^nx_iy_i\,,\qquad h_3=\sum_{i=1}^n \frac{x_i^2}{2}\,.
 \end{equation}
 relative to $\omega$ that span a Lie algebra isomorphic to $\mathfrak{sl}_2$. 
 
Now, \eqref{eq:HHOWN} can be related to  Stratonovich operator of the form
 $$
\mathfrak{S}(x ,y ,t,W_1)=\left(\lambda(t) X_2+a(t)X_1-b(t)X_3,\lambda'(t)X_2+a'(t)X_1-b'(t)X_3\right), $$
which turns \eqref{eq:HHOWN} into a  Hamiltonian stochastic Lie system and $V$ into an associated Vessiot--Guldberg Lie algebra isomorphic to $\mathfrak{sl}_2$.

\begin{definition}
    A {\it Hamiltonian stochastic Lie system} on a manifold $M$ relative to a probability space $\Omega$ and a semi-martingale  $B\colon \mathbb{R}_+\times\Omega\rightarrow \mathbb{R}^\ell$ is a stochastic Lie system related to a Stratonovich operator $\mathfrak{H}$ admitting an associated Vessiot--Guldberg Lie algebra on $M$ consisting of Hamiltonian vector fields relative to some geometric structure on $M$.
\end{definition}

Of course, the above definition means that there are Hamiltonian stochastic Lie systems related to symplectic, Poisson, Jacobi, contact or Dirac geometries, among others. It is worth noting that we are mainly interested in Hamiltonian stochastic Lie systems given by
\begin{equation}\label{eq:HamStoLieSys}
\delta \Gamma=\sum_{\alpha=1}^rb_1^\alpha(t)X_\alpha \delta t+\sum_{i=2}^\ell\sum_{\alpha=1}^rb_i^\alpha(t)X_\alpha\circ\delta \mathcal{B}^i
\end{equation}
such that the vector fields $X_1,\ldots,X_r$ span an $r$-dimensional Vessiot--Guldberg Lie algebra of Hamiltonian vector fields on $M$ relative to a symplectic form. Note that the coefficients in \eqref{eq:HamStoLieSys} depend only on time, although dependence on $\mathcal{B}^2,\dotsc,\mathcal{B}^\ell$ will be also analysed. In our main case of study, \eqref{eq:HamStoLieSys} is then related to an $\ell$-family of $t$-dependent Hamiltonian functions being each of them a linear combination with coefficients depending on $t$ of certain Hamiltonian functions $h_1,\dotsc,h_r\in \Cinfty(M)$ contained in a finite-dimensional Lie algebra of Hamiltonian functions relative to the Poisson bracket of the symplectic manifold. It may happen that the Hamiltonian functions $h_1,\ldots,h_r$ need to be enlarged with an additional constant function to close on a Lie algebra, but this option is hereafter skipped for simplicity. In particular, the $\ell$-family of $t$-dependent vector fields in \eqref{eq:HamStoLieSys} have $t$-dependent Hamiltonian functions
$$
\widetilde{h}_1=\sum_{\alpha=1}^rb^\alpha_1(t)h_\alpha\,,\qquad \widetilde{h}_j=\sum_{\alpha=1}^rb^\alpha_j(t)h_\alpha\,,\qquad j=2,\ldots,\ell\,,
$$
respectively. 
It is standard to call $\widetilde{h}_1$ the {\it Hamiltonian of the stochastic differential equation}. We call $\langle  h_1,\ldots,h_r\rangle$ a {\it  Lie--Hamilton Lie algebra} of a Hamiltonian stochastic Lie system.

\begin{figure}
    \centering
    \includegraphics[width=0.5\linewidth]{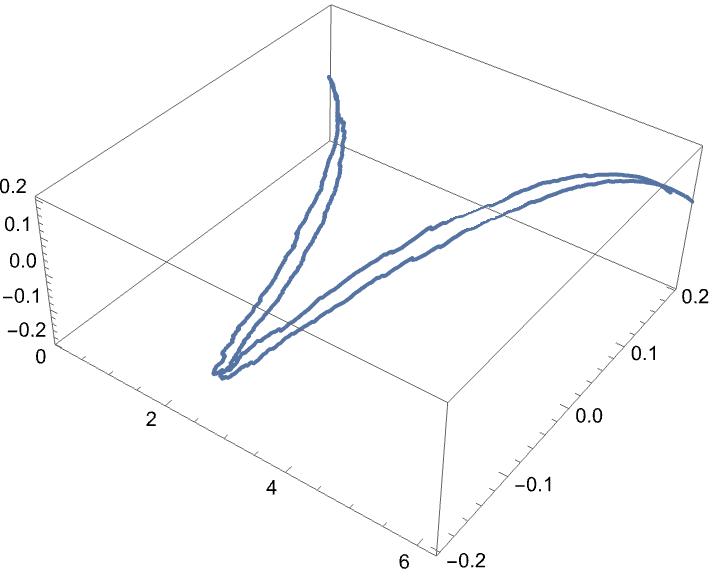}\includegraphics[width=0.5\linewidth]{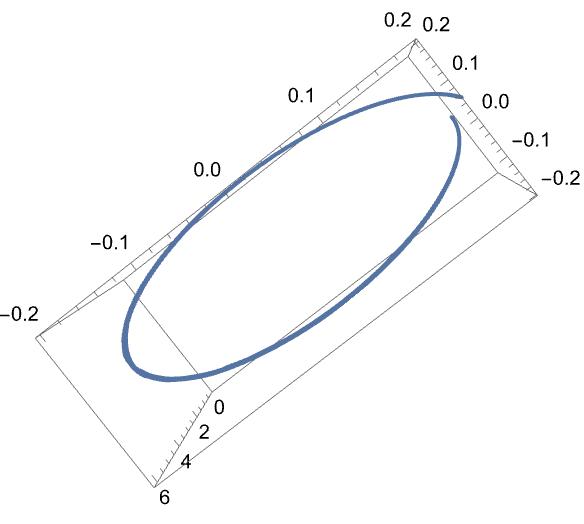}
    \caption{These are two representations of the evolution in terms of the time of a particular solution to the Hamiltonian Lie system $\delta y=-x\delta t-Ax\delta W,\delta x=y\delta t+Ay\delta W$, with initial condition $(.2,0)$ a semi-martingale $W$, and a parameter $A$. The symplectic structure is $\omega=\d x\wedge \d y$. The system has a strong constant of motion $x^2+y^2$. The constant of motion is always conserved, but solutions jump back and forward relative to the deterministic solution.}
    \label{fig:strong-constant}
\end{figure}
\section{Stability for stochastic Hamiltonian systems}\label{Sec:SHS}
Let us recall the stability theory for stochastic differential equations and apply it to stochastic Hamiltonian systems. In particular, we will be specially interested in Hamiltonian stochastic Lie systems. For a survey on the theory for general stability of stochastic differential equations and other related results see \cite{Ar74,LO08,Sa70}.

Consider a stochastic differential equation on $M$ of the particular form \begin{equation}
  \label{stochastic} \delta \Gamma =\mathfrak{S}_t (t,
  \Gamma)\delta t+ \mathfrak{S}_\mathcal{B} (t,
  \Gamma)\circ \delta \mathcal{B}\,,\qquad \forall \Gamma\in  M,\qquad \forall t\in \mathbb{R},
\end{equation}
where $B=(t,\mathcal{B})\colon\mathbb{R}_+ \times \Omega \rightarrow \R^\ell$ is an
$\R^\ell$-valued semi-martingale and 
$$\mathfrak{S} (t, \Gamma)=(\mathfrak{S}_t(t,\Gamma),\mathfrak{S}_\mathcal{B}(t,\Gamma)) \colon \T_B \mathbb{R}^\ell
\to \T_\Gamma M$$ is the associated Stratonovich operator. As in previous sections, it is assumed that initial conditions are values in $M$ chosen with probability equal to one. Note that coefficients of the Stratonovich operator are considered to depend only on $t$ and $\Gamma$. Stochastic differential equations of this type are common in the literature \cite{BCC_01}.

An {\it equilibrium point} for the stochastic differential equation \eqref{stochastic} is a point $\Gamma_e\in M$ such that
$$
\mathfrak{S} (t,
  \Gamma_e)=0\,,\qquad \forall t\in \mathbb{R}\,.
$$
In this case, the stochastic disturbance, which is described by $\mathfrak{S}_\mathcal{B}$, does not act at the equilibrium point $\Gamma_e$.  
Note that if $\mathfrak{S}_\mathcal{B}$ is not assumed to be zero at the equilibrium point for every $t\in \mathbb{R}$, solutions may move away from that point depending on the values of the stochastic variable (see Figure \ref{fig:enter-label}).

\begin{figure}
    \centering
    \includegraphics[width=0.5\linewidth]{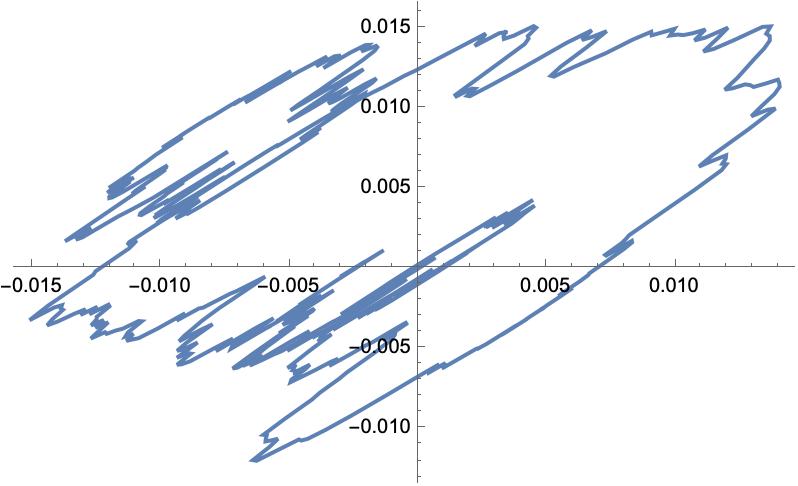}
    \caption{Representation of a solution to $\delta x=y\delta t-0.008\delta W,\delta y=-x\delta t-0.008\delta W$  on $\mathbb{R}^2$ with initial condition $(0,0)$ and stochastic variable given by a semi-martingale $W$, showing that a non-vanishing stochastic part of the stochastic differential equation may move solutions away from a deterministic equilibrium point.}
    \label{fig:enter-label}
\end{figure}

\begin{definition}
Consider the Stratonovich differential equation 
\begin{equation}\label{eq:Stabil}
\delta \Gamma = \mathfrak{S}(t, \Gamma)\circ\delta B
\end{equation}
with solutions $\Gamma\colon\mathbb{R}_+\times\Omega\rightarrow M$.
Given $\Gamma_0 \in  M$ and $s \in  \mathbb{R}$, denote by $\Gamma^{s,\Gamma_0}$ the particular solution of \eqref{eq:Stabil} such that $\Gamma^{s,\Gamma_0}(\omega_0)=\Gamma_0$ for all $\omega_0\in \Omega$. Let $\Gamma_e \in  M$ be an equilibrium point of \eqref{eq:Stabil}. 
The equilibrium point $\Gamma_e$ is
{\it almost surely (Lyapunov) stable} if for  any open neighbourhood $U$ of $\Gamma_e$ there exists a
neighbourhood $\mathcal{U}$ of $\Gamma_e$ such that for any $\Gamma'\in \mathcal{U}$, one has that $\Gamma^{s,\Gamma'}\subset U$ almost surely (a.s.).
 
\end{definition}

Let us provide a relevant analogue of the method used in Hamiltonian symplectic systems to verify stability \cite{AM78}. Before that, let us provide a definition of strongly conserved quantities for Hamiltonian stochastic differential equations (see \cite[Definition 2.2]{LO08}). 
\begin{definition}A function $f\in \Cinfty(M)$ is said to be {\it strongly  conserved}
 of a stochastic Hamiltonian \eqref{eq:Stabil} if, for any particular solution $\Gamma$ with initial condition $\Gamma_0$,
  we have that $f(\Gamma)=f(\Gamma_0)$.
\end{definition}

Strongly conserved quantities can also be defined for systems \eqref{eq:Stabil} whose Stratonovich operator also depend on $B$.  The most relevant result for our purposes in the study of stability of Hamiltonian stochastic Lie systems is the following proposition (see \cite[Theorem 2.15]{LO08} and references therein). 
\begin{proposition} (Stochastic Dirichlet’s Criterion) Assume that there exists a function $f\in \Cinfty(M)$ such that $\d f_{\Gamma_e} = 0$ and that the
quadratic form $\d^2f_{\Gamma_e}$ is positive or negative definite. If $f$ is a strongly conserved quantity on the solutions of \eqref{eq:Stabil}, then the equilibrium point $\Gamma_e$ is almost surely stable.
\end{proposition}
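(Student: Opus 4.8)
The plan is to run the classical Lagrange--Dirichlet argument, now reading the conserved quantity as a stochastic Lyapunov function. First I would reduce to the positive-definite case: replacing $f$ by $-f$ changes neither its being (strongly or weakly) conserved nor the definiteness of $\d^2 f_{z_0}$, so I may assume $\d^2 f_{z_0}$ is positive definite. Setting $g=f-f(z_0)$, one has $g(z_0)=0$, $\d g_{z_0}=\d f_{z_0}=0$, and $\d^2 g_{z_0}=\d^2 f_{z_0}>0$. A second-order Taylor expansion in a chart then yields an open neighbourhood $W$ of $z_0$ on which $g>0$ away from $z_0$; that is, $z_0$ is a strict local minimum of $g$, and $g$ remains conserved in the same (strong or weak) sense as $f$.

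Next I would build the trapping neighbourhoods. Given an arbitrary open neighbourhood $U$ of $z_0$, shrink it so that its closure is a compact subset of $W$. Since $g$ is continuous and strictly positive on the compact boundary $\partial U$, the number $c:=\min_{x\in\partial U}g(x)$ is strictly positive, and $V:=\{x\in U: g(x)<c\}$ is an open neighbourhood of $z_0$ (because $g(z_0)=0<c$). The point of this construction is that the sublevel set $\{g<c\}$ cannot be left without first reaching a value of $g$ at least $c$, which inside $U$ can happen only on $\partial U$.

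For the almost surely (strongly conserved) case I would argue pathwise. Fix $\Gamma_0\in V$ and let $\tau:=\inf\{t\ge s:\Gamma^{s,\Gamma_0}_t\notin U\}$ be the first exit time from $U$. Strong conservation gives $g(\Gamma^{s,\Gamma_0}_t)=g(\Gamma_0)<c$ for all $t$, almost surely. If $\tau$ were finite on a set of positive probability, continuity of the sample paths (the driving semi-martingales here being continuous) would force $\Gamma^{s,\Gamma_0}_\tau\in\partial U$ and hence $g(\Gamma^{s,\Gamma_0}_\tau)\ge c$, a contradiction. Therefore $\tau=\infty$ almost surely and $\Gamma^{s,\Gamma_0}_t\in U$ for all $t\ge s$ with probability one, which is precisely almost sure stability with this choice of $V$.

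For the stability-in-probability (weakly conserved) case I would replace the pathwise argument by a maximal inequality. Let $\sigma$ be the exit time of $\Gamma^{s,\Gamma_0}$ from $W$; weak conservation means that the stopped process $t\mapsto g(\Gamma^{s,\Gamma_0}_{t\wedge\sigma})$ is a non-negative supermartingale (it is non-negative since $g\ge 0$ on $W$). Doob's maximal inequality for non-negative supermartingales then gives
\[
P\Big(\sup_{t\ge s} g(\Gamma^{s,\Gamma_0}_{t\wedge\sigma})\ge c\Big)\le \frac{\mathbb E\big[g(\Gamma_0)\big]}{c}=\frac{g(\Gamma_0)}{c}.
\]
Since leaving $U$ forces $g$ to attain a value $\ge c$ on $\partial U$, the exit event is contained in the event above, and because $g$ is continuous with $g(z_0)=0$ the right-hand side tends to $0$ as $\Gamma_0\to z_0$. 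Taking $U$ to be the $\epsilon$-ball about $z_0$ with respect to the distance $d$ recovers the displayed definition of stability in probability. The main obstacle I anticipate is the careful justification of the two conservation notions: concretely, that ``strongly conserved'' delivers pathwise constancy of $g\circ\Gamma$ (so the continuity-of-paths trapping applies) whereas ``weakly conserved'' delivers exactly the supermartingale property after localisation to $W$, so that Doob's inequality is legitimate. This interplay, together with the continuity of the sample paths of the solutions, is where the genuinely stochastic content of the statement resides.
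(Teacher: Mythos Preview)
The paper does not supply a proof of this proposition: it is stated and then the text immediately moves on (``Let us turn now to Hamiltonian stochastic differential equations\ldots''). In fact the paper does not even define the notions of \emph{strongly} versus \emph{weakly} conserved quantity that appear in the statement; the result is being quoted from the stochastic mechanics literature (the surrounding discussion cites \cite{LO08}) rather than proved here. So there is no ``paper's own proof'' against which to compare yours.

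That said, your argument is the standard Lagrange--Dirichlet/Lyapunov route and is sound once the missing definitions are supplied in the way you guess: pathwise constancy for the strong case, and local-martingale behaviour for the weak case. In the weak case your key step---passing from ``$g(\Gamma_{t\wedge\sigma})$ is a nonnegative local martingale'' to ``supermartingale'' so that Doob's maximal inequality applies---does go through (a nonnegative local martingale is a supermartingale by Fatou), but you should say this explicitly rather than assuming ``weakly conserved'' already means supermartingale. The only genuine gap is therefore terminological, and it is a gap the paper shares: neither you nor the paper pins down what ``strongly/weakly conserved'' means in this context, and the argument's validity hinges on that choice.
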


Let us turn now to Hamiltonian stochastic differential equations on symplectic manifolds. In this case, we focus on stochastic differential equations of the type
\begin{equation}\label{eq:HamSDE}
\delta  \Gamma=\mathfrak{H}(t,\Gamma )\circ \delta B\,,
\end{equation}
where $B\colon\mathbb{R}_+\times \Omega\rightarrow \mathbb{R}^\ell$ is a semi-martingale and $\mathfrak{H}(t,\Gamma):\T_B \mathbb{R}^\ell\rightarrow \T_\Gamma M$ is a Stratonovich operator such that
$$
\mathfrak{H}(t,\Gamma)=(Y_1(t,\Gamma),\ldots,Y_\ell(t,\Gamma))\,, 
$$
where $Y_1,\ldots,Y_\ell$ stand for $t$-dependent Hamiltonian vector fields relative to a symplectic form $\omega$ on $M$ with $t$-dependent Hamiltonian functions $\widetilde{h}_1,\ldots,\widetilde{h}_\ell\in \Cinfty(\mathbb{R}\times M)$, respectively. The symplectic structure in this case allows the study of the system via its Hamiltonian functions, which provides powerful methods to study their properties. Note that \eqref{eq:HamSDE} has indeed  an associated function $\widetilde{h}\colon\mathbb{R}\times M\rightarrow \mathbb{R}^{\ell}$ of the form
$$
\widetilde{h}=(\widetilde{h}_1,\ldots,\widetilde{h}_\ell).
$$
Recall that $\widetilde{h}_1$ is normally called the {\it Hamiltonian of the system}, but Hamiltonian stochastic differential equations have several associated Hamiltonians. In fact,  $\widetilde{h}_1$ need not be conserved even when it is $t$-independent. It is immediate that it is not a constant of motion in general as $\{\widetilde{h}_1,\ldots,
\widetilde{h}_\ell\}$ not need to be zero in general. Notwithstanding, it is remarkable that an $f\in \Cinfty(M)$ is a strong constant of motion for \eqref{eq:HamStoLieSys} if  (cf. \cite{LO08})
$$
\{(\widetilde{h}_\alpha)_t,f\}=0\,,\qquad \alpha=1,\ldots,\ell\,,\qquad \forall t\in \mathbb{R}.
$$

Let us assume that there exists a function $f$ that is a constant of motion of $\delta\Gamma$ and it also satisfies that it has a strict minimum at an equilibrium point. Then, it is immediate that $\delta\Gamma$ has a stable equilibrium point.

It is immediate to apply the above results to linear or affine stochastic differential equations, which are stochastic Lie systems, and to choose cases with a Vessiot--Guldberg Lie algebra of Hamiltonian vector fields relative to a symplectic structure.

Let us consider a particular case of the the stochastic differential equation on  $\mathbb{R}^n$ with a Wiener process $W_1$ given in \eqref{eq:HHOWN} for $2a(t)=a'(t)=2$, $2b(t)=b'(t)=-2\omega^2$ and $n=1$. Then, \eqref{eq:HHOWN} can now be related to a Vessiot--Guldberg Lie algebra $V$  on $\T^*\mathbb{R} $ spanned by
 $$
 X_1= y \frac{\partial}{\partial x }
\,,\qquad X_2=\frac 12  \left(x \frac{\partial}{\partial x }-y \frac{\partial}{\partial y }\right)\,,\qquad X_{3}=- x \frac{\partial}{\partial y }
$$ spanning a Lie algebra  isomorphic to $\mathfrak{sl}_2$  of Hamiltonian vector fields relative to the symplectic form
 $
 \omega=\d x\wedge \d y\,.
 $
Hence, $X_1,X_2,X_3$ have Hamiltonian functions given by
 $$
 h_1 = \frac{y^2}{2}\,,\qquad h_2 = \frac{1}{2}\ x y \,,\qquad h_3= \frac{x ^2}{2}\,.
 $$
 relative to $\omega$ that span a Lie algebra isomorphic to $\mathfrak{sl}_2$. Now, \eqref{eq:HHOWN} can be related to  Stratonovich operator of the form
 $$
\mathfrak{H}(x ,y ,t,W)=\left(X_1+\omega^2X_3,2X_1+2\omega^2X_3\right), $$
which turns \eqref{eq:HHOWN} into a  Hamiltonian stochastic Lie system and $V$ into an associated Vessiot--Guldberg Lie algebra isomorphic to $\mathfrak{sl}_2$. Note that this system has two Hamiltonians
$$
\widetilde{h}=(y^2/2+\omega^2x^2/2,y^2 +\omega^2x^2 ).
$$
Then, $y^2/2+\omega^2/2$ is the Hamiltonian of the system. 
 The point $(0,0)$ is an equilibrium point and the function $f=y^2/2+\omega^2x^2/2$ is strongly conserved because $\{f,f\}=\{2f,f\}=0$ relative to the Poisson bracket induced by $\omega$. Since $d^2f$ is positive definite, then $(0,0)$ is almost surely stable.

\section{Relative equilibrium points and stochastic Hamiltonian Lie systems} \label{Sec:Relative_Equilibrium_Stochastic_Differential}

In short, relative equilibrium points for a system of differential equations on a manifold $M$, in general, and for a system of stochastic differential equations, in particular, are points where the dynamics is generated by Lie group actions such that the elements of the group are understood as  symmetries of the differential equations. In this section,  we define for the first time relative equilibrium points for Hamiltonian stochastic differential equations in Stratonovich form of the particular type  \eqref{eq:HamSDE} using the theory in \cite{LO08,LO09b}. 

Let us start by reviewing the notion of symmetries for stochastic differential equations and symplectic reduction for Hamiltonian stochastic differential equations (see \cite[Section 2]{LO09b} for details). 

\begin{definition}\label{def:symmetry}
Let $B:\mathbb{R}_+\times \Omega\rightarrow \mathbb{R}^\ell$ be a semi-martingale and let $\mathfrak{S}\colon\T\mathbb{R}^\ell\times M\rightarrow \T M$ be a Stratonovich operator. A diffeomorphism $\phi\colon M\rightarrow M$ is a {\it symmetry of the stochastic differential equation associated with $\mathfrak{S}$} if
$$
\mathfrak{S}(\delta B,\phi (\Gamma))=\T\phi [\mathfrak{S}(\delta B,\Gamma)]\,,\qquad \forall (\delta B,\Gamma)\in \T \mathbb{R}^\ell\times M\,.
$$
More generally, a Lie group action $\Phi:G\times M\rightarrow M$ is a {\it Lie group of symmetries of  the stochastic differential equation induced by $\mathfrak{S}$} if $\Phi_g:\Gamma\in M\mapsto \Phi(g,\Gamma)\in M$ is a symmetry of $\mathfrak{S}$ for every $g\in G$. Similarly, we say that $\phi$ (resp. $\Phi$) are {\it symmetries (resp. a Lie group of symmetries) of the Stratonovich operator $\mathfrak{S}$.}     
\end{definition}

\begin{remark} Note the slight change of notation for the Stratonovich operator in Definition \ref{def:symmetry}, where for instance the first entry is an element of $\T\mathbb{R}^\ell$ instead of an element of $\mathbb{R}^\ell$ as in previous sections.  It is worth stressing that the term `Lie group symmetries' also appears in the literature on standard and stochastic PDEs with the same meaning of `Lie group of symmetries' (cf. \cite{AGH_18}). 
\end{remark}

The relevance of the symmetries of Stratonovich operators is due to the fact that they transform a particular solution of the stochastic differential equations related to it to another particular solution of the same stochastic differential equation \cite[Theorem 2.2]{LO09b}. In the case of a Lie group of symmetries of a stochastic differential equation induced by $\mathfrak{S}$, then every element of the associated Lie group can be understood as a symmetry of the stochastic differential equation.

Let us give a $t$-dependent generalisation of the stochastic Marsden--Meyer--Weinstein reduction for Hamiltonian stochastic  systems as introduced in \cite[Theorem 3.1]{LO09b} for the case of  Hamiltonian stochastic systems \eqref{eq:HamSDE}, which essentially amounts to applying a standard reduction for each particular $t\in \mathbb{R}$.  

Let $(M,\omega)$ be a symplectic manifold and let $\Phi\colon G\times M\rightarrow M$ be a Lie group action  admitting a coadjoint equivariant momentum map $J \colon M \rightarrow \mathfrak{g}^*$ being also a Lie group of symmetries of the Hamiltonian functions $\widetilde{h}\colon\mathbb{R}\times M\rightarrow \mathbb{R}^\ell$ of a Stratonovich operator $\mathfrak{H}$. Every regular $\mu\in \mathfrak{g}^*$ of $J$ gives rise to a function $h_\mu \colon \mathbb{R}\times J^{-1}(\mu)/G_\mu \rightarrow \mathbb{R}^\ell$ on the manifold $M_\mu=J^{-1}(\mu)/G_\mu$
determined by the equality $h_\mu(t,\pi_\mu(\cdot))=h(t, \iota_\mu(\cdot))$ for every $t\in \mathbb{R}$, where $\iota_\mu\colon J^{-1}(\mu)\hookrightarrow M$ is the natural immersion,  we assume the isotropy subgroup $G_\mu$ of the coadjoint action at $\mu$ to act freely and properly on $J^{-1}(\mu)$, and  $\pi_\mu:J^{-1}(\mu)\rightarrow M_\mu$ is the natural projection. Moreover, one obtains a unique symplectic form $\omega_\mu$ on $M_\mu$ induced by the relation $\pi_\mu^*\omega_\mu=\iota_\mu^*\omega$. In turn, this induces a stochastic Hamiltonian
system on the symplectic reduced space $(M_\mu, \omega_\mu)$ whose Stratonovich operator $\mathfrak{H}_\mu \colon \T \mathbb{R}^\ell \times M_\mu \to \T M_\mu$ is
given by (cf. \cite[Theorem 3.1]{LO09b})
$$
\mathfrak{H}_\mu(\delta B, [\Gamma]))=\T_\Gamma\pi_\mu(\mathfrak{H}(\delta B,\Gamma))\,,\qquad \forall \delta B\in \T\mathbb{R}^\ell\,,\qquad \forall\Gamma\in M\,.
$$
Moreover, if $\Gamma$ is a solution  of the stochastic Hamiltonian
system associated with $\mathfrak{H}$ with initial condition $\Gamma_0\subset J^{-1}(\mu)$ at $t=0$, then so is $\Gamma_\mu := \pi_\mu(\Gamma)$
with respect to $\mathfrak{H}_\mu$, with initial condition $\pi_\mu(\Gamma_0)$ at $t=0$.

In the particular case of stochastic Hamiltonian Lie systems, the reduced stochastic equation is a stochastic Hamiltonian Lie system related to a Stratonovich operator of the form
$$
\mathfrak{H}_\mu(t, [\Gamma]))  = \left(\sum_{\alpha=1}^rb_1^\alpha(t)X_{h^\mu_\alpha}([\Gamma]),\ldots,\sum_{\alpha=1}^rb_\ell^\alpha(t)X_{h^\mu_\alpha}([\Gamma])\right),
$$
where the
$h^\mu_\alpha \in\Cinfty(M_\mu)$ are determined by the conditions  $h^\mu_\alpha \circ \pi_\mu=h_\alpha\circ \iota_\mu$ from the generators of the Lie--Hamiltonian Lie algebra of functions $h_1,\ldots,h_r$ of the initial Hamiltonian stochastic Lie system. Note that in the case of Hamiltonian stochastic Lie systems with a Vessiot--Guldberg Lie algebra $V$, we require all related Hamiltonian functions, $h_1,\ldots,h_r$ to be invariant relative to the action of the Lie group $\Phi\colon G\times M\rightarrow M$. This condition is always satisfied for the smallest Lie algebra $V^X$ as it follows from the fact that $\mathfrak{H}$ can be reduced for each time $t$. 

The above discussion suggests the following definition.

\begin{definition} Given a Hamiltonian stochastic differential equation \eqref{eq:HamSDE} on $M$, a {\it relative equilibrium point}  $\Gamma_{\rm rel}\in M$ of \eqref{eq:HamSDE} is a point such that
\begin{equation}\label{eq:RelEquH}
\mathfrak{H}_\alpha(t,\Gamma_{\rm rel})\in D_{\Gamma_{\rm rel}},\qquad \alpha=1,\ldots,\ell\,, \qquad \forall t\in \mathbb{R}\,,
\end{equation}
where $D$ is the distribution generated by the fundamental vector fields of a Lie group action of symmetries $\Phi\colon G\times M\rightarrow M$ of \eqref{eq:HamSDE} with a momentum map $J\colon M\rightarrow \mathfrak{g}^*$.
\end{definition}

To understand the above condition in terms of the better known characterisation for standard Hamiltonian systems of differential equations \cite{MS88}, consider a basis  $\{\xi_M^1,\ldots, \xi^r_M\}$ of fundamental vector fields of the Lie group action $\Phi$ corresponding to a basis  $\{\xi_1,\ldots,\xi_r$\} of $\mathfrak{g}$. Then, the relation \eqref{eq:RelEquH} amounts to saying that
$$
\mathfrak{H}(t,\Gamma_{\rm rel})=\left(\sum_{\alpha=1}^rf_1^\alpha (t)\xi^\alpha_M(\Gamma_{\rm rel}),\ldots, \sum_{\alpha=1}^rf_\ell^\alpha (t)\xi^\alpha_M(\Gamma_{\rm rel})\right)\,,\qquad \forall t\in \mathbb{R},$$
for certain $t$-dependent functions $f_i^\alpha\in \Cinfty(\mathbb{R})$ for $i=1,\ldots,\ell$ and $\alpha=1,\ldots,r$.

Let $\mu=J(\Gamma_{\rm rel})$ in the above definition. If the Hamiltonian stochastic differential equation \eqref{eq:HamSDE} can be reduced, then $\Gamma_{\rm rel}\in M$ is contained in $J^{-1}(\mu)$. If $J^{-1}(\mu)$ is a submanifold of $M$, the evolution of $\mathfrak{H}$ is contained within $J^{-1}(\mu)$. This implies that the right-hand side of  \eqref{eq:RelEquH} belongs indeed to $D_{\Gamma_{\rm rel}}\cap \T_{\Gamma_{\rm rel}}J^{-1}(\mu)$. Moreover, the restriction of $\mathfrak{H}$ to it can be reduced to  $J^{-1}(\mu)/G_\mu$ and the $\mathfrak{H}_\mu(t,\pi_\mu(J^{-1}(\mu)))=0$. In other words, one has the following proposition, whose inverse is analogue to its deterministic case \cite{LZ21}.

\begin{proposition} Let $\Gamma_{\rm rel}\in M$ be a relative equilibrium point for a stochastic Hamiltonian system \eqref{eq:HamSDE} and let $J(\Gamma_{\rm rel})=\mu$ be a regular point of $J$. Let $G_\mu$ act on $J^{-1}(\mu)$ freely and properly so that $J^{-1}(\mu)/G_\mu$ becomes a manifold. Then, the reduction of \eqref{eq:HamSDE} to $M_\mu=J^{-1}(\mu)/G_\mu$  is such that $\pi_\mu(\Gamma_{\rm rel})$ is an equilibrium point of the reduced system and vice versa.
\end{proposition}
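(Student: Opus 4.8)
The plan is to establish the two implications separately, resting on the Marsden--Weinstein machinery recalled above. Throughout, write $D_\Gamma=\T_\Gamma(G\cdot\Gamma)=\{\xi_M(\Gamma):\xi\in\g\}$ for the fibre of the distribution generated by the fundamental vector fields, and recall that, since the Hamiltonian functions $h_1,\ldots,h_r$ are $G$-invariant, the momentum map $J$ is a conserved quantity of each $X_{h_\alpha}$. Consequently every component of $\mathfrak{H}(t,\Gamma)$, being a linear combination of the $X_{h_\alpha}$, is tangent to the level set $J^{-1}(\mu)$ at each point $\Gamma\in J^{-1}(\mu)$; this tangency is what makes the reduction act on $\mathfrak{H}$ at all.

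First I would record the two linear-algebraic facts at a point $\Gamma\in J^{-1}(\mu)$. Since $\mu$ is regular, $M_\mu$ is a manifold and $\pi_\mu:J^{-1}(\mu)\to M_\mu$ is a submersion whose fibres are the $G_\mu$-orbits, so that $\ker\T_\Gamma\pi_\mu=\T_\Gamma(G_\mu\cdot\Gamma)=\{\xi_M(\Gamma):\xi\in\g_\mu\}$. The key ingredient is the infinitesimal equivariance of $J$, namely $\T_\Gamma J(\xi_M(\Gamma))=-\ad_\xi^*\mu$ for every $\xi\in\g$, which yields the lemma
$$ D_\Gamma\cap\T_\Gamma J^{-1}(\mu)=\{\xi_M(\Gamma):\xi\in\g_\mu\}=\ker\T_\Gamma\pi_\mu. $$
Indeed, $\xi_M(\Gamma)\in\T_\Gamma J^{-1}(\mu)=\ker\T_\Gamma J$ holds precisely when $\ad_\xi^*\mu=0$, i.e. when $\xi\in\g_\mu$, while the reverse inclusion is immediate since elements of $\g_\mu$ fix $\mu$.

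With the lemma in hand the forward implication is short. If $\Gamma$ is a relative equilibrium point, each component of $\mathfrak{H}(t,\Gamma)$ lies in $D_\Gamma$; being also tangent to $J^{-1}(\mu)$, it lies in $D_\Gamma\cap\T_\Gamma J^{-1}(\mu)=\ker\T_\Gamma\pi_\mu$. Since $\mathfrak{H}_\mu(t,\pi_\mu(\Gamma))$ is obtained by pushing the components of $\mathfrak{H}(t,\Gamma)$ forward through $\T_\Gamma\pi_\mu$ (this is exactly the statement that each $X_{h_\alpha^\mu}$ is $\pi_\mu$-related to $X_{h_\alpha}$), it follows that $\mathfrak{H}_\mu(t,\pi_\mu(\Gamma))=0$ for every $t$, so $\pi_\mu(\Gamma)$ is an equilibrium of the reduced system. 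For the converse, if $\pi_\mu(\Gamma)$ is a reduced equilibrium then each component of $\mathfrak{H}(t,\Gamma)$ lies in $\ker\T_\Gamma\pi_\mu=\{\xi_M(\Gamma):\xi\in\g_\mu\}\subseteq D_\Gamma$, whence $\mathfrak{H}(t,\Gamma)\in D_\Gamma$ and $\Gamma$ is a relative equilibrium point.

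The main obstacle is the lemma, and within it the sign-correct derivation of the equivariance identity together with the verification that the orbit directions tangent to $J^{-1}(\mu)$ are exactly those generated by $\g_\mu$. One should also check that the stabiliser $\g_\Gamma$ causes no inconsistency: for $\zeta\in\g_\Gamma$ one has $\zeta_M(\Gamma)=0$ and, by differentiating equivariance along $G_\Gamma$, also $\ad_\zeta^*\mu=0$, so the description $D_\Gamma\cap\T_\Gamma J^{-1}(\mu)=\{\xi_M(\Gamma):\xi\in\g_\mu\}$ is well posed. Everything else reduces to the functoriality of $\T\pi_\mu$ and the $G$-invariance of the Hamiltonians, which are already guaranteed by the reduction setup.
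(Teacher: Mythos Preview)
Your proof is correct and follows the same line of reasoning as the paper, which sketches the argument in the paragraph immediately preceding the proposition: the paper observes that the Stratonovich operator at a relative equilibrium lies in $D_\Gamma\cap\T_\Gamma J^{-1}(\mu)$ and hence reduces to zero on $M_\mu$, declaring the converse ``analogous to the deterministic case.'' Your version is more thorough in that you make explicit and justify the key identity $D_\Gamma\cap\T_\Gamma J^{-1}(\mu)=\ker\T_\Gamma\pi_\mu$ via infinitesimal equivariance of $J$, and you actually spell out the converse rather than deferring it.
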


Finally, let us characterise relative equilibrium points for stochastic Hamiltonian systems.

\begin{theorem}\label{RelativeEquilibrum}
{\bf (Stochastic Relative Equilibrium Theorem)} A point $\Gamma_{\rm rel}\in M$ is a relative equilibrium point for a Hamiltonian stochastic differential equation \eqref{eq:HamStoLieSys} related to $\widetilde{h}:\mathbb{R}\times M\rightarrow \mathbb{R}^\ell$ if and only if there exist elements $\xi_t=(\xi^1_t,\ldots,\xi_t^\ell)\in\mathfrak{g}^\ell$ for $t\in \mathbb{R}$ such that $(\xi_{t},\Gamma_{\rm rel})$ are critical points of the coordinates of the functions $\widehat{h}_t:\mathfrak{g}^\ell\times M\rightarrow \mathbb{R}^\ell$, given by
\[\widehat{h}_t^\alpha(\xi_t,\Gamma)=\widetilde{h}^\alpha(t,\Gamma)-\left\langle {J}(\Gamma) - \mu_{e},\ \xi_t^\alpha\right\rangle,\qquad \alpha=1,\ldots,\ell,
\] 
for $\mu_e:=J(\Gamma_e)$.
\end{theorem}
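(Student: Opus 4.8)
The plan is to reduce the relative-equilibrium condition to an infinitesimal identity between differentials and then to match it, coordinate by coordinate, with the vanishing of $\d\widehat h^\alpha$. First I would recall that, since $J$ is an (equivariant) momentum map for the action $\Phi$, every fundamental vector field is Hamiltonian with the corresponding component of $J$ as Hamiltonian function, that is $\xi_M=X_{\langle J,\xi\rangle}$, equivalently $\contr{\xi_M}\omega=\d\langle J,\xi\rangle$. Consequently, at the point $\Gamma_e$ the distribution generated by the fundamental vector fields reads $D_{\Gamma_e}=\{\,X_{\langle J,\xi\rangle}(\Gamma_e):\xi\in\mathfrak g\,\}$.

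Next I would unpack the defining condition $\mathfrak H(t,\Gamma_e)\in D_{\Gamma_e}$ componentwise. Writing the components of $\mathfrak H$ as $X_{h^1},\ldots,X_{h^\ell}$, this is equivalent to the existence, for each $\alpha=1,\ldots,\ell$, of some $\xi^\alpha\in\mathfrak g$ with $X_{h^\alpha}(\Gamma_e)=(\xi^\alpha)_M(\Gamma_e)=X_{\langle J,\xi^\alpha\rangle}(\Gamma_e)$. Since $\omega$ is non-degenerate, two Hamiltonian vector fields coincide at a point exactly when the differentials of their Hamiltonians coincide there; hence the relative-equilibrium condition is equivalent to the existence of $\xi_e=(\xi^1,\ldots,\xi^\ell)\in\mathfrak g^\ell$ such that
\[ \d h^\alpha_{\Gamma_e}=\d\langle J,\xi^\alpha\rangle_{\Gamma_e}\,,\qquad \alpha=1,\ldots,\ell\,. \]

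The core step is then to compute the differential of each coordinate $\widehat h^\alpha$ on $\mathfrak g^\ell\times M$ at $(\xi_e,\Gamma_e)$ and to separate its $M$-part from its $\mathfrak g^\ell$-part. Differentiating in the $M$-directions (holding $\xi^\alpha$ fixed and using that $\langle\mu_e,\xi^\alpha\rangle$ is constant) gives precisely $\d h^\alpha_{\Gamma_e}-\d\langle J,\xi^\alpha\rangle_{\Gamma_e}$, whose vanishing is the identity just displayed. Differentiating in the $\mathfrak g^\ell$-directions gives $-\langle J(\Gamma_e)-\mu_e,\cdot\rangle$ in the $\xi^\alpha$-slot and zero in every other slot; because $\mu_e:=J(\Gamma_e)$, this part vanishes identically for any choice of $\xi_e$. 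Thus $(\xi_e,\Gamma_e)$ is a critical point of $\widehat h^\alpha$ for all $\alpha$ if and only if the displayed identities hold, which is exactly the relative-equilibrium condition, proving both implications at once.

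The step demanding the most care is the bookkeeping of the $\mathfrak g^\ell$-derivative: one must check that its automatic vanishing, forced by the normalisation $\mu_e=J(\Gamma_e)$, is compatible with---and not vacuous for---the critical-point requirement. In effect, the momentum constraint $J=\mu_e$ that appears as an independent equation in the classical energy-momentum method \cite{MS88} is here absorbed into the very definition of $\mu_e$, so that only the $M$-part of $\d\widehat h^\alpha$ carries content. The remaining ingredients---the momentum-map identity and the non-degeneracy of $\omega$---are standard, so I expect no further obstacle.
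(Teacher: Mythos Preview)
Your proposal is correct and follows essentially the same approach as the paper: both arguments reduce the relative-equilibrium condition to the identity $\d h^\alpha_{\Gamma_e}=\d\langle J,\xi^\alpha\rangle_{\Gamma_e}$ via the momentum-map relation $\xi_M=X_{\langle J,\xi\rangle}$ and the non-degeneracy of $\omega$, and then identify this with the vanishing of the $M$-part of $\d\widehat h^\alpha$. Your treatment is in fact more explicit than the paper's on one point---the observation that the $\mathfrak g^\ell$-derivative of $\widehat h^\alpha$ vanishes automatically at $(\xi_e,\Gamma_e)$ because of the normalisation $\mu_e=J(\Gamma_e)$---which the paper leaves implicit.
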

\begin{proof}Let us prove the direct part. At the points $(\xi_t,\Gamma_{\rm rel})$,  the coordinates of the functions $\widehat{h}_t(\xi_t,\cdot):M\rightarrow \mathbb{R}^\ell$ have a critical point $\Gamma_{\rm rel}\in M$. Hence, each coordinate of $\widehat{h}_t(\xi_t,\cdot)$ satisfies that $\d(\widetilde{h}_t^\alpha-\langle J(\cdot),\xi^\alpha_t\rangle)|_{\Gamma_{\rm rel}}=0$. This implies that $(X_{\widetilde{h}_t^\alpha})({\Gamma_{\rm rel})=X_{J_{\xi^\alpha_t}}}(\Gamma_{\rm rel})$ and the point $\Gamma_{\rm rel}$ is a relative equilibrium point of $\mathfrak{H}$.  Conversely, if $\Gamma_{\rm rel}$ is a relative equilibrium point to $\mathfrak{H}$, at each coordinate of $\mathfrak{H}$, one has that $(X_{\widetilde{h}^\alpha})_t(\Gamma_{\rm rel})=(\xi_t)_M^\alpha(\Gamma_{\rm rel})$ for certain $\xi_t^1,\ldots,\xi_t^\ell\in \mathfrak{g}$ with $t\in \mathbb{R}$. Consequently, one has that $X_{\widetilde{h}_t^\alpha-\langle J,\xi^\alpha_t\rangle}=0$. This implies that each one of these vector fields has a Hamiltonian function given by $\widetilde{h}_t^\alpha-\langle J-\mu_e,\xi_t^\alpha\rangle$ which has a critical point at the given point $\Gamma_{\rm rel}$. 
\end{proof}


Let us consider a particular case of the the stochastic differential equation on  $\mathbb{R}^n$ with a Wiener process $W_1$ given in \eqref{eq:HHOWN} for $2a(t)=a'(t)=2$, $2b(t)=b'(t)=-2\omega^2$, any $n$,  and a constant $\omega$ (see \cite{Co_12} for similar models). Then, \eqref{eq:HHOWN} can now be related to a Vessiot--Guldberg Lie algebra $V$  on $\T^*\mathbb{R}^n $ spanned by the vector fields \eqref{eq:StoHarOsc} isomorphic to $\mathfrak{sl}_2$ and consisting of Hamiltonian vector fields relative to the symplectic form $
 \omega=\sum_{i=1}^n\d x_i\wedge \d y_i\,.
 $ Hence, $X_1,X_2,X_3$ have Hamiltonian functions given by \eqref{Eq:StoOscHamFun} relative to $\omega$ that span a Lie algebra of functions isomorphic to $\mathfrak{sl}_2$. Now, \eqref{eq:HHOWN} can be related to  Stratonovich operator of the form
 $$
\mathfrak{H}(x ,y ,t,W)=\left(X_1+\omega^2X_3,2X_1+2\omega^2X_3\right), $$
which turns \eqref{eq:HHOWN} into a  Hamiltonian stochastic Lie system and $V$ into an associated Vessiot--Guldberg Lie algebra. Note that this system has two Hamiltonians
$$
\widetilde{h}=(h_1,h_2)=\left(\sum_{i=1}^n (y_i^2/2+\omega^2x_i^2/2),\sum_{i=1}^n (y_i^2 +\omega^2x_i^2 )\right).
$$
Let us consider the Lie symmetries $Y_k=-\omega^2x_k\frac{\partial}{\partial y_k}+y_k\frac{\partial}{\partial x_k}$ for $k=1,\ldots,s$ of the above Hamiltonian system. In other words, $Y_1,\ldots,Y_s$ are Hamiltonian relative to $\omega$ and are Lie symmetries of $\mathfrak{H}$. Moreover, $Y_1,\ldots,Y_s$ commute between themselves giving rise to an Abelian Lie algebra of vector fields whose integration gives rise to a Lie group action on $\T\mathbb{R}^n$ and an associated momentum map 
\[{\rm J}:(x_1,y_1,\ldots,x_n,y_n)\in \T\mathbb{R}^n \mapsto (x_1^2/2+\omega^2 y_1^2/2,\ldots,x_s^2/2+\omega^2y_s^2/2)\in \mathbb{R}^{s*}.\] 
Since  $Y_1,\ldots,Y_s$ are Hamiltonian Lie symmetries of $(h_1,h_2)$, the system can be reduced. Following the approach in \cite{LO09b}, one projects the Hamiltonian stochastic system on  $J^{-1}(c_1,\ldots,c_s)$ with $\prod_{i=1}^sc_i\neq 0$. The latter condition is employed to ensure that $(c_1,\ldots,c_s)$ is a regular value of $J$ and $J^{-1}(c_1,\ldots,c_s)$ is a submanifold. Note that $Y_1\wedge \ldots\wedge Y_s\neq 0$ and $Y_1,\ldots,Y_s$ are tangent to $J^{-1}(c_1,\ldots,c_s)$, which gives rise to a reduced stochastic system on $J^{-1}(c_1,\ldots,c_s)/\mathbb{R}^s\simeq \T\mathbb{R}^{n-s}$, which is Hamiltonian relative to a symplectic form $\omega=\sum_{i=s+1}^n\d x_i\wedge \d y_i$. More exactly, the reduced stochastic system reads \begin{equation}\label{eq:HHOWNR}
\delta\begin{pmatrix}
    x_i \\ y_i
\end{pmatrix} = \begin{pmatrix}
    0& 1 \\ -\omega^2 & 0\end{pmatrix} \begin{pmatrix}
    x_i \\ y_i
\end{pmatrix} \delta t + \begin{pmatrix}
0 & 2 \\ -2\omega^2 & 0
\end{pmatrix} \begin{pmatrix}
    x_i \\ y_i
\end{pmatrix} \circ \delta W_1\,,\qquad i=s+1,\ldots,n\,.
\end{equation}
Note that the equilibrium points of the above system are those points such that  $x_{s+1}=y_{s+1}=\ldots=x_n=y_n=0$, which are the projections of the original ones of the form
$$
(x_1,y_1,\ldots,x_s,y_s,\stackrel{(n-s)-{\rm  pairs}}{\overbrace{0,0,\ldots,0,0}}).
$$
Note that these points are indeed the relative equilibrium points of our initial stochastic system, where the components of $\mathfrak{H}$ can be written as a linear combination of $Y_1,\ldots,Y_s$. Indeed, at these points $X_1+\omega^2 X_3=\sum_{i=1}^sY_i$ and $2X_1+2\omega^2X_3=\sum_{i=1}^s2Y_i$. As stated in Proposition \ref{RelativeEquilibrum}, one has that at these points $h_1-\sum_{i=1}^sx_i^2/2+\omega^2y_i^2/2$ and $h_2-\sum_{i=1}^sx_i^2+\omega^2y_i^2$ have equilibrium points.  

\section{Stochastic Poisson coalgebra method}\label{Se:PCSLS}

Let us review and extend the Poisson coalgebra method for Hamiltonian systems relative to a symplectic form. In general, our theory is a stochastic generalisation of what can be found in the classical setting \cite[Section 4.2.7]{LS21}. Although the procedure is very similar to the original approach, a few key differences allow its application to many new domains. In particular, the method can  still be applied to Hamiltonian stochastic Lie systems by considering that they are determined by a certain $\ell$-family of $t$-dependent Hamiltonian functions. Indeed, our procedure is a new modification of the coalgebra method for deriving superposition rules for $k$-symplectic Lie systems (see \cite[Section 7.8]{LS21} and references therein). 

It is convenient to stress that the proof of the stochastic Lie theorem shows that a superposition rule for stochastic Lie systems, in Stratonovich form, can be obtained in a similar manner to the case of deterministic Lie systems, namely (see \cite{CGM07,Dissertationes,LS21} for
	details and examples):
	\begin{enumerate}
		\item Consider a Vessiot--Guldberg Lie algebra of the stochastic Lie system spanned by a basis of vector fields $X_1,\ldots,X_r$ on the manifold $M$.
  \item Find the smallest natural number $m\in \mathbb{N}$, so that $X^{[m]}_1,\ldots,X_r^{[m]}$  
		are linearly independent at a generic point.
		\item Use local coordinates $x^1,\ldots,x^n$ on $M$ and consider this coordinate
		system to be defined on each copy of $M$ within $M^{m+1}$ to get a coordinate system
		$\{x^i_{(a)}\mid i=1,\ldots,n,\,\,a=0,\ldots,m\}$ on $M^{m+1}$.
		Obtain first integrals $F_1,\ldots, F_n$ common to all the diagonal prolongations
		$X^{[m+1]}_1,\ldots,  X^{[m+1]}_r$ such that 
		\begin{equation}\label{Cond}
		\frac{\partial(F_1,\ldots,F_n)}{\partial(x_{(0)}^1,\ldots,x_{(0)}^n)}\neq 0.
		\end{equation}
		  
		\item 
	    Condition \eqref{Cond} allows us to ensure that the equations $F_i=k_i$, for $i=1,\ldots,n$, enable us to write the expressions of the variables $x_{(0)}^1\ldots,x_{(0)}^n$ in
		terms of $x^1_{(a)},\ldots,x_{(a)}^n$, with $a=1,\ldots,m,$  and $k_1,\ldots,k_n$.
		\item The obtained expressions lead to a superposition rule depending on a generic family of $m$ particular solutions
		and the constants $k_1,\ldots, k_n$.  This holds true even in the Stratonovich stochastic realm. 
	\end{enumerate}

Note that every Hamiltonian stochastic Lie system is related to a Stratonovich operator $\mathfrak{H}$ given by $\ell$ components and each one can be understood as a $t$-dependent vector field. Hence, $\mathfrak{H}$ can be understood as a $t$-dependent $\ell$-vector field, i.e. for every $t\in \mathbb{R}$, it is a section of the $\ell$-tangent bundle of velocities $\T^\ell M=\T M\oplus \dotsb \oplus \T M\rightarrow M$, where $\T M\oplus\dotsb\oplus \T M$ is to be considered as the Whitney sum of the tangent bundle to $M$ with itself. Similarly, one can prolong diagonally $\mathfrak{H}$ to a map $\mathfrak{H}^{[m]}:\T \mathbb{R}^
\ell\times M^m\rightarrow \T M^m$. 
Moreover, $\mathfrak{H}$ is related to a $t$-dependent Hamiltonian function $h\colon\mathbb{R}\times M\rightarrow \mathbb{R}^\ell$ which gives rise, by  prolonging diagonally each of its components for every fixed $t$,  to a mapping $h^{[m]}:\mathbb{R}\times M^m\rightarrow \mathbb{R}^\ell$. Additionally, the diagonal prolongation $h^{[m]}$ is the  $t$-dependent Hamiltonian function related to $\mathfrak{H}^{[m]}$. Then, the following result is immediate.

\begin{proposition}\label{Cor:LieAlgJL2}
    If $\mathfrak{H}$ is a Hamiltonian stochastic Lie system admitting a $t$-dependent Hamiltonian $h\colon\mathbb{R}\times M\rightarrow \mathbb{R}^\ell$ relative to a symplectic form $\omega$, then $\mathfrak{H}^{[m]}$ is a Hamiltonian stochastic Lie system relative to the symplectic form $\omega^{[m]}$ admitting a $t$-dependent Hamiltonian $h^{[m]}\colon\mathbb{R}\times M^m\rightarrow \mathbb{R}^\ell$. In particular, if $h_1,\ldots,h_r$ is a basis of a Lie--Hamilton Lie algebra  for $\mathfrak{H}$, then $h_1^{[m]},\ldots,h_r^{[m]}$ is a basis of a Lie--Hamilton Lie algebra for $\mathfrak{H}^{[m]}$.
\end{proposition}

We have the following immediate proposition.
\begin{proposition} The space of $\ell$-Hamiltonian functions, namely $\Cinfty(M,
\mathbb{R}^
\ell)$ on a symplectic manifold $(M,\omega)$, with Poisson bracket $\{\cdot,\cdot\}_\omega$ induced by $\omega$, is a Poisson algebra relative to the bracket
$$
\{h,h'\}_\ell=(\{h_1,h'_1\}_\omega,\ldots,\{h_\ell,h'_\ell\}_\omega)\,, \quad \forall h=(h_1,\ldots,h_\ell),\quad h'=(h'_1,\ldots,h'_\ell)\in \Cinfty(M,\mathbb{R}^\ell)\,,
$$
and the multiplication
$$
h\cdot h'=(h_1h_1',\ldots,h_\ell h'_\ell)\,,\qquad 
\forall h,h'\in \Cinfty(M,\mathbb{R}^\ell)\,.
$$
\end{proposition}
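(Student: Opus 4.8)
The plan is to exploit the fact that both the bracket $\{\cdot,\cdot\}_\ell$ and the product $\cdot$ are defined slotwise, so that $\Cinfty(M,\mathbb{R}^\ell)$ is nothing but the direct product of $\ell$ copies of the standard Poisson algebra $(\Cinfty(M),\{\cdot,\cdot\}_\omega,\cdot)$ attached to the symplectic manifold $(M,\omega)$. First I would recall, and assume as standard, that $(\Cinfty(M),\{\cdot,\cdot\}_\omega,\cdot)$ is a Poisson algebra: the pointwise product is commutative and associative, the Poisson bracket $\{f,g\}_\omega=\omega(X_f,X_g)$ is bilinear, antisymmetric and satisfies the Jacobi identity (a consequence of $\d\omega=0$), and the Leibniz rule $\{f,gh\}_\omega=\{f,g\}_\omega\,h+g\,\{f,h\}_\omega$ holds. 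I would also note in one line that the whole of $\Cinfty(M,\mathbb{R}^\ell)$ genuinely consists of $\ell$-Hamiltonian functions, since on a symplectic manifold every smooth function admits a Hamiltonian vector field, so the space over which the structure is defined is the correct one.

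Next I would verify each Poisson-algebra axiom for $(\Cinfty(M,\mathbb{R}^\ell),\{\cdot,\cdot\}_\ell,\cdot)$ by reading it off componentwise. Writing $h=(h_1,\ldots,h_\ell)$ and similarly for $h',h''$, commutativity and associativity of $h\cdot h'=(h_1h'_1,\ldots,h_\ell h'_\ell)$ follow from those of the pointwise product in each slot. Bilinearity and antisymmetry of $\{h,h'\}_\ell=(\{h_1,h'_1\}_\omega,\ldots,\{h_\ell,h'_\ell\}_\omega)$ are immediate, and the Jacobi identity holds because its $i$-th component is exactly the Jacobi identity for $\{\cdot,\cdot\}_\omega$ applied to $h_i,h'_i,h''_i$. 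Likewise, the Leibniz compatibility $\{h,h'\cdot h''\}_\ell=\{h,h'\}_\ell\cdot h''+h'\cdot\{h,h''\}_\ell$ reduces in the $i$-th slot to $\{h_i,h'_ih''_i\}_\omega=\{h_i,h'_i\}_\omega\,h''_i+h'_i\,\{h_i,h''_i\}_\omega$, which is the scalar Leibniz rule already recalled.

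There is essentially no obstacle: the statement is the general fact that a finite direct product of Poisson algebras, equipped with slotwise operations, is again a Poisson algebra. The only point requiring care — and it is what makes the argument trivial rather than subtle — is that both structures are genuinely diagonal: the $i$-th output of $\{\cdot,\cdot\}_\ell$ depends only on the $i$-th inputs, and the product is diagonal too. Hence no cross terms between distinct slots ever arise, and every identity decouples into $\ell$ independent copies of the corresponding identity in $\Cinfty(M)$. This is exactly why the proposition is flagged as immediate.
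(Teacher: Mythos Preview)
Your proposal is correct and matches the paper's approach: the paper states the proposition as ``the following immediate result'' and gives no proof, and your argument spells out exactly why it is immediate, namely that all operations are slotwise so the structure is the $\ell$-fold direct product of the standard Poisson algebra $(\Cinfty(M),\{\cdot,\cdot\}_\omega,\cdot)$.
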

It is worth recalling that the space of $\ell$-Hamiltonian functions relative to an $\ell$-symplectic form was not a Poisson algebra. This was due to the fact that the multiplication of $\ell$-Hamiltonian functions could not be ensured to be an $\ell$-Hamiltonian function as each coordinate is assumed to be a Hamiltonian function of the same vector field and this cannot be ensured for their bracket as each coordinate of the bracket is associated with different presymplectic forms and may be related to different vector fields  (see \cite[Chapter 7]{LS21}). Here, this problem does not appear, since each $h_i$, with $i=1,\ldots,\ell$, may be the Hamiltonian function of a different vector field.

For the sake of completeness, let us prove the following result. Recall that ${\rm Lie}(\{h_t\}_{t\in \R},\{\cdot,\cdot\}_\omega)$ is the smallest Lie algebra of Hamiltonian functions relative to $\omega$ containing $\{h_t\}_{t\in \mathbb{R}}$.

\begin{proposition}
    Let $\mathfrak{H}$ be a Hamiltonian stochastic Lie system with respect to a symplectic form $\omega$ and possessing a Lie--Hamilton  Lie algebra $\mathfrak{W}=\langle h_1,\ldots,h_r\rangle$ relative to the Poisson bracket related to $\omega$. A function $f\in \Cinfty(M)$ is a strong constant of motion for $\mathfrak{H}$ if  commutes with all the elements of ${\rm Lie}(\{h_t\}_{t\in \R},\{\cdot,\cdot\}_\omega)$. In particular, $f$ is a strong contact of the motion if it commutes with the elements of $\mathfrak{M}$. 
\end{proposition}
\begin{proof} The function $f$ is a strong constant of motion for $\mathfrak{H}$ if
\begin{equation}\label{Eq:Cons}
0=X^i_tf\,,\qquad \forall t\in \mathbb{R}\,,\qquad i=1,\ldots,\ell,
\end{equation}
where the $X_t^i$ are the components at a fixed $t$ of the $t$-dependent $\ell$-vector field related to $\mathfrak{H}$.  Since $X_t^if=\{h^i_t,f\}_\omega$ for $i=1,\ldots,\ell$ and $t\in \mathbb{R}$, the result follows. Note also that
$$
\{f,\{h_t,h_{t'}\}_\omega\}_\omega=\{\{f,h_t\},h_{t'}\}_\omega\}_\omega+\{h_t,\{f,h_{t'}\}_\omega\}_\omega\,,\qquad \forall t,t'\in \mathbb{R}\,.
$$
Inductively, $f$ is commutes with all the elements of ${\rm Lie}(\{h_t\}_{t\in \mathbb{R}},\{\cdot,\cdot\}_\omega)$. Since we restrict ourselves to the case for a symplectic form $\omega$, one has that ${\rm Lie}(\{h_t\}_{t\in \mathbb{R}},\{\cdot,\cdot\}_\omega)$ is included in $ \mathfrak{W}\oplus \mathbb{R}$, where $\mathbb{R}$ stands for the space of constant functions. And the latter ensures that $f$ is a strong constant of motion.
 
\end{proof}

In Lie--Hamilton systems, a $t$-dependent Hamiltonian vector field admits for every $t\in \mathbb{R}$ a Hamiltonian function belonging to a finite-dimensional Lie algebra of Hamiltonian functions relative to a Poisson bracket.

\begin{proposition}\label{Prop:6.7}
    Let $\mathfrak{H}$ be a Hamiltonian stochastic Lie system with an associated Lie--Hamilton  Lie algebra $(\mathfrak{W}=\langle h_1,\ldots,h_r\rangle, \{\cdot,\cdot\}_\omega)$ relative to the symplectic form $\omega$. Let $\{v_1,\ldots ,v_r\}$ be a basis of linear coordinates on $\mathfrak{g}^*\simeq \mathfrak{W}$. Given the  momentum map $J : \Gamma\in M\mapsto h_i(\Gamma)=(J^*v^i)(\Gamma)\in\mathfrak{g}^\ast$, the pull-back $J^\ast C$ of any Casimir function $C$ on $\mathfrak{g}^\ast$ is a constant of motion for $\mathfrak{H}$. Moreover, if  $C=C(v_1,\ldots,v_r)$, then 
    \begin{equation}\label{Eq:ExCas}
C\left(\sum_{a=1}^kh_1(x_{(a)}),\ldots, \sum_{a=1}^kh_r(x_{(a)})\right)\,,\qquad 1\leq k\leq m, 
\end{equation} is a constant of motion of $\mathfrak{H}^{[m]}$.
\end{proposition}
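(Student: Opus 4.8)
The plan is to reduce both claims to the single fact that a Casimir function Poisson-commutes with everything, transported to $M$ (and then to $M^k$) through a good momentum map. I will treat the first assertion and then transcribe the argument onto the diagonal prolongation for the second.

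First I would recall that, being a good momentum map, $J:M\to\mathfrak{W}^\ast$ is a Poisson map from $(M,\{\cdot,\cdot\}_\omega)$ into $\mathfrak{W}^\ast$ with its Lie--Poisson bracket $\{\cdot,\cdot\}_{\mathfrak{W}^\ast}$, and that $h_\alpha=J^\ast v_\alpha$ for $\alpha=1,\ldots,r$. Hence, for any $C\in\Cinfty(\mathfrak{W}^\ast)$, one has $\{J^\ast C,h_\alpha\}_\omega=\{J^\ast C,J^\ast v_\alpha\}_\omega=J^\ast\{C,v_\alpha\}_{\mathfrak{W}^\ast}$. If $C$ is a Casimir, the right-hand side vanishes for every $\alpha$, so $J^\ast C$ Poisson-commutes with each generator $h_\alpha$ of $\mathfrak{W}$. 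Since $\{J^\ast C,\cdot\}_\omega$ is a derivation of the Poisson bracket, it then annihilates every element of $\mathfrak{W}=\langle h_1,\ldots,h_r\rangle$, in particular every element of ${\rm Lie}(\{h_t\}_{t\in\mathbb{R}})\subseteq\mathfrak{W}$. By the preceding characterisation of constants of the motion, $J^\ast C$ is a constant of the motion for $\mathfrak{H}$, which settles the first claim.

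For the second claim I would transfer the same reasoning to the diagonal prolongation. By Proposition~\ref{Cor:LieAlgJL2}, $\mathfrak{H}^{[k]}$ is a Hamiltonian stochastic Lie system on $(M^k,\omega^{[k]})$ whose Lie--Hamilton algebra is generated by the prolonged Hamiltonian functions $h_\alpha^{[k]}=\sum_{a=1}^k h_\alpha(x_{(a)})$, with $\alpha=1,\ldots,r$. The crucial step is to verify that the prolongation $f\mapsto f^{[k]}$, restricted to $\mathfrak{W}$, is a Lie algebra morphism, that is $\{h_\alpha^{[k]},h_\beta^{[k]}\}_{\omega^{[k]}}=\{h_\alpha,h_\beta\}_\omega^{[k]}$. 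This holds because $\omega^{[k]}=\sum_{a=1}^k\pr{a}^\ast\omega$ is supported on mutually independent factors of $M^k$, so its Poisson bracket splits as a sum over the copies, and the additivity of the prolongation of functions then delivers the identity. Consequently the $h_\alpha^{[k]}$ close with the same structure constants as the $h_\alpha$, and the map $J^{[k]}:M^k\to\mathfrak{W}^\ast$ fixed by $(J^{[k]})^\ast v_\alpha=h_\alpha^{[k]}$ is once again a good (Poisson) momentum map, in the same spirit as the morphism property \eqref{eq 14} already recorded for prolongations of vector fields.

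Finally I would observe that the function in \eqref{Eq:ExCas} is precisely $(J^{[k]})^\ast C$, and repeat the first paragraph verbatim with $J^{[k]}$, $\omega^{[k]}$, and the generators $h_\alpha^{[k]}$ replacing $J$, $\omega$, and $h_\alpha$: a Casimir $C$ gives $\{(J^{[k]})^\ast C,h_\alpha^{[k]}\}_{\omega^{[k]}}=0$ for all $\alpha$, and by the characterisation of constants of the motion applied to $\mathfrak{H}^{[k]}$ this makes $(J^{[k]})^\ast C$ a constant of the motion of $\mathfrak{H}^{[k]}$. I expect the main obstacle to be the middle step, namely confirming that the diagonal prolongation preserves the Poisson structure of $\mathfrak{W}$ so that $J^{[k]}$ remains a good momentum map; once that is in place, everything else is a direct transcription of the Casimir/derivation argument.
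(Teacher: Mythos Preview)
Your argument is correct. The paper, however, states this proposition without proof: it only remarks afterwards that ``the coproduct is responsible for the form of \eqref{Eq:ExCas}'', alluding to the coalgebra structure, which is exactly what you implement through the diagonal prolongation $J^{[k]}$. Your route---pulling the Casimir back along the Poisson momentum map, using the preceding characterisation of constants of the motion, and then transporting the construction to $(M^k,\omega^{[k]})$ via the Lie algebra morphism $h_\alpha\mapsto h_\alpha^{[k]}$---is the standard one and fills in precisely what the paper leaves implicit.
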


The stochastic Poisson coalgebra method above takes its name from the fact that it is applied to Hamiltonian stochastic Lie systems and analyses the use of Poisson coalgebras and a so-called {\it coproduct} to obtain superposition rules. In fact, the coproduct is responsible for the form of \eqref{Eq:ExCas} (see \cite{LS21}). Although we have focused on Hamiltonian symplectic systems, the above Poisson coalgebra method can also be applied to Hamiltonian systems relative to many other geometric structures.

Let us provide a simple example illustrating the above techniques based on the system
\begin{equation}\label{eq:HHOWNRANew}
\delta\begin{pmatrix}
    x_i \\ y_i
\end{pmatrix} = \left[\begin{pmatrix}
    0& 1 \\ -\omega^2(t) & 0\end{pmatrix} \begin{pmatrix}
    x_i \\ y_i
\end{pmatrix}+\begin{pmatrix}
0 \\
g(t)
\end{pmatrix}\right] \delta t + \begin{pmatrix}
0 \\
\alpha(t)
\end{pmatrix}  \circ \delta W_1,
\end{equation}
for a Brownian motion $W_1$, and any $t$-dependent functions $g(t)$, $\omega(t)$, and $\alpha(t)$, which retrieves certain oscillators in \cite{Co_12} and is connected to numerous physical oscillators with driven stochastic components. Since 
$$
\mathfrak{H}=(\omega^2(t)X_3+X_1+g(t)X_5,\alpha(t)X_5),
$$
system \eqref{eq:HHOWNRANew} admits a Vessiot--Guldberg Lie algebra of Hamiltonian vector fields $V_o$ relative to $\omega=\sum_{i=1}^n\d x_i\wedge \d y_i$ spanned by
$$
 X_1= \sum_{i=1}^ny_i \frac{\partial}{\partial x_i }
\,,\qquad X_2=\frac 12 \sum_{i=1}^n  \left(x_i \frac{\partial}{\partial x_i }-y_i \frac{\partial}{\partial y_i }\right)\,,\qquad X_{3}=-\sum_{i=1}^n x_i \frac{\partial}{\partial y_i },\qquad 
$$
$$X_4=\sum_{i=1}^n\frac{\partial}{\partial x_i},\qquad X_5=\sum_{i=1}^n\frac{\partial}{\partial y_i}.
$$
Their non-vanishing commutation relations read
\[
[X_{1},X_{2}]=X_{1},\qquad [X_{1},X_{3}]=2X_{2},\qquad [X_{2},X_{3}]=X_{3},
\]
\[
[X_{1},X_{5}]=-X_{4},\qquad [X_{3},X_4]=X_5,\qquad [X_2,X_4]=-\frac 12X_4,\qquad [X_2,X_5]=\frac 12 X_5.
\]
Therefore, the Lie algebra  $V_o$  is isomorphic to the semi-direct sum of Lie algebras  $\mathfrak{sl}_2\ltimes \mathbb{R}^2$. In particular, $\langle X_1,X_2,X_3\rangle$ is isomorphic to $\mathfrak{sl}_2$, while $\langle X_4,X_5\rangle$ is an Abelian ideal of $V_o$ . 
The vector fields $X_1,\ldots,X_5$ are related to a Lie algebra of Hamiltonian functions spanned by
\begin{equation}\label{Eq:two-photon}
h_1=\frac 12\sum_{i=1}^ny_i^2,\quad h_2=\frac 12\sum_{i=1}^n x_iy_i,\quad h_3=\frac 12\sum_{i=1}^nx_i^2,\quad h_4=\sum_{i=1}^ny_i,\quad h_5=-\sum_{i=1}^nx_i,\quad h_6=n,
\end{equation}
Hence, they span a Lie algebra $\mathfrak{W}_0$  isomorphic to the so-called two-photon algebra \cite{BH_01}. Indeed, $\langle h_1,h_2,h_3\rangle$ is isomorphic to $\mathfrak{sl}_2$, while $\langle h_4,h_5,h_6\rangle$ is a three-dimensional Heisenberg algebra  $\mathfrak{h}_3$ and the LH algebra $\mathfrak{W}_0$ is isomorphic to $\mathfrak{g}_0\simeq \mathfrak{sl}_2\ltimes\mathfrak{h}_3$. This gives rise to a momentum map $J_n:\Gamma\in \T\mathbb{R}^n\mapsto h_1(\Gamma)e^1+\ldots +h_6(\Gamma)e^6\in \mathfrak{g}_0^*$. Note that $\{e_1,\ldots,e_6\}$ is a basis of $\mathfrak{g}_0$ spanning the same commutation relations than \eqref{Eq:two-photon}. Then, one has the Casimir of $\mathfrak{g}_0$ given by \footnote{The isomorphism $(h_0, h_1, h_2, h_3, h_4, h_5)\leftrightarrow (e_6, -e_5, e_4, -2e_2, e_3, -e_1)$ from the Hamiltonian functions in \cite[p. 27]{BHLS_15} maps the Casimir in \cite{BHLS_15} into $C$.
}
$$
C=2(-e_5^2e_1-e_4^2e_3-2e_5e_4e_2)-4e_6(e_2^2-e_3e_1),
$$
which gives rise to a constant of motion of \eqref{eq:HHOWNRANew} of the form
$$
F_n=J_n^*C=2(-h_5^2h_1-h_4^2h_3-2h_5h_4h_2)-4h_6(h_2^2-h_3h_1).
$$
One can also see that $h_1,\ldots,h_6$ are the diagonal prolongations to $(\T\mathbb{R})^n$ of the oscillator \eqref{eq:HHOWNRANew} for $n=1$. Then, $F_n$ is nothing but the constant of motion \eqref{Eq:ExCas} given for this specific case. Moreover,
$$
F_1=F_2=0,\qquad F_3=(x_3(y_2-y)+x_2(y_1-y_3)+x_1(y_3-y_2))^2
$$ while further functions $F_n$ for $n>3$ have a complicated expression.

\section{Conclusions and outlook}\label{Eq:Conclusions}

Our paper provided an introduction to stochastic Lie systems that could draw the attention of people working on stochastic systems or Lie systems. In this respect, some basic notions and results in stochastic systems and Lie systems have been explained in detail. In particular, some types of semi-martingales, which are suitable for use  in the theory, have been applied. Meanwhile, some differential geometry concepts and other ideas for Lie systems have been explained through initial examples. This paper has skipped many technical details of the introduction to stochastic Lie systems in \cite{LO09} that can be omitted in practical applications. This has made our presentation more accessible to a general public. We also reviewed the theory of stochastic Lie systems, correcting a mistake in the literature to give a solid foundation for the theory. Some relations between the Stratonovich and the It\^o approaches to stochastic Lie systems have been reviewed. This showed that the stochastic Lie theorem in the It\^o framework is quite different from its classical form.
Moreover, we proved that this fact is relevant when considering practical applications of the theory.

Types and generalisations of stochastic Lie systems have been introduced. In particular, we have focused on the study of Hamiltonian stochastic Lie systems. The theory of stability and relative equilibrium points for Hamiltonian stochastic differential equations has been developed and, in particular, we have focused in the study of Hamiltonian stochastic Lie systems relative to symplectic forms. Several examples with potential applications have been provided so as to illustrate the theory.

Some results concerning the energy-momentum method for stochastic Lie systems have been obtained, generalising \cite{Bz_14}. Many examples concerning physical/epidemiological applications have been developed. We have also remarked that the theory of Poisson coalgebras can be developed for our Hamiltonian stochastic Lie systems.

Concerning applications, stochastic SIS models have been analysed. Such models are usually studied  using deterministic methods in the literature. Instead, we approach them here in a pure stochastic way. Many other physical systems, like different  types of stochastic oscillators with damped terms, have also been analysed.

In the future, we aim to further develop the energy-momentum method for stochastic Lie systems. This entails the search for criteria ensuring the stability of equilibrium points, for a Hamiltonian Stratonovich operator,  after reduction. Moreover, we expect, as in the deterministic case, to obtain certain degeneracies of the Hamiltonian functions that will need to be analysed to solve the problem \cite{MS88}.  Note that a further study of the advantages of epidemiological models from a purely stochastic point of view will need to be undertaken. In particular, we plan to study SIS models that have been traditionally studied in a deterministic Hamiltonian manner \cite{EFSZ20,EFSZ22} via stochastic Lie systems, and analyse their possible advantages. There seems to be a quite large new field of applications for Hamiltonian stochastic Lie systems or Hamiltonian foliated stochastic Lie systems \cite{AGH_18,LZ21}. Moreover, we also plan to study a stochastic Hamiltonian system appearing in celestial mechanics concerning the stochastic variation of the inertia tensor \cite{Sa70}, biological methods, epidemiological models, coronavirus systems, etc. Additionally, it would be interesting to study other types of superposition rules appearing in stochastic differential equations \cite{LSZ_23} and to apply the Poisson coalgebra method to particular Hamiltonian stochastic Lie systems. Finally, we would like to analyse the potential extensions of methods designed for Lie systems, or their generalisations, to stochastic counterparts. For instance, PDE Lie systems appear in hydrodynamic equations in \cite{GL_25} and the analysis of conditional symmetries in \cite{GL_19}. It would be interesting to extend, first, the notion of a stochastic Lie system to PDE Lie systems \cite{Dissertationes,Ram_02}. Moreover, one of the authors of this work along with his colleagues are studying hydrodynamical equations with a stochastic character, e.g. stochastic Burgers equations \cite{GL_25,JWL_24}, so as to generalise the Riemann invariant method \cite{GL_23} to a stochastic realm, and conditional and standard Lie symmetries for stochastic partial differential equations, partially attempting to extend the methods of \cite{GL_19} and, potentially, other works \cite{OBPS_20}.

\section*{Acknowledgements}
\addcontentsline{toc}{section}{Acknowledgements}
We acknowledge fruitful discussions with J. P. Ortega on establishing a correction for his stochastic Lie theorem. We express our gratitude to two anonymous referees for their valuable feedback, which motivated us to enhance our manuscript considerably by refining and expanding examples, adding new insights into our theoretical framework, and offering suggestions to make it more understandable to a broader readership. 
X. Rivas acknowledges partial financial support from the Spanish Ministry of Science and Innovation grants PID2021-125515NB-C21 and RED2022-134301-T of AEI, and the Ministry of Research and Universities of the Catalan Government project 2021 SGR 00603. The research of Marcin Zaj\k ac was funded by the Polish National Science Centre under a PRELUDIUM project with contract number UMO-2021/41/N/ST1/02908. For the purpose of Open Access, the authors have applied a CC-BY public copyright licence to any
Author Accepted Manuscript (AAM) version arising from this submission. The authors also acknowledge the contribution of RED2022-134301-T funded by MCIN/AEI/10.13039/501100011033 (Spain). 


\bibliographystyle{abbrv}
\addcontentsline{toc}{section}{References}
\itemsep 0pt plus 1pt
\bibliography{references.bib}

\end{document}